 \newtheorem{thm}{Theorem}[section]
 \newtheorem{lem}[thm]{Lemma}
 \newtheorem{prop}[thm]{Proposition}
 \theoremstyle{definition}
 \newtheorem{defn}[thm]{Definition}
 \theoremstyle{remark}
 \newtheorem{rem}[thm]{Remark}
 \numberwithin{equation}{section}
 \newcommand\N{\mathbb{N}}
 \newcommand\Z{\mathbb{Z}}
 \newcommand\R{\mathbb{R}}
 \newcommand\C{\mathbb{C}}
 \renewcommand\S{\mathcal{S}}
\newcommand\open{U}
\newcommand\M{\mathcal{M}}
 \newcommand\ev[2]{\langle#1,#2\rangle}
\DeclareMathOperator\id{id}
\DeclareMathOperator\condS{S}
\DeclareMathOperator\condA{A}
\DeclareMathOperator\DN{DN}
\DeclareMathOperator\uDN{\underline{DN}}
\begin{document}

\title[The vector-valued Stieltjes moment problem with general exponents]
 {The vector-valued Stieltjes moment problem with general exponents}

\author[A. Debrouwere]{Andreas Debrouwere}

\address{%
Department of Mathematics and Data Science \\ Vrije Universiteit Brussel, Belgium\\ Pleinlaan 2 \\ 1050 Brussels \\ Belgium}

\email{andreas.debrouwere@vub.be}

\author[L. Neyt]{Lenny Neyt}
\address{Universit\"{a}t Trier \\ FB IV Mathematik \\ D-54286 Trier \\ Germany}
\email{lenny.neyt@ugent.be}
\thanks{L. Neyt gratefully acknowledges support by the Alexander von Humboldt Foundation and by FWO-Vlaanderen through the postdoctoral grant 12ZG921N}

\subjclass{Primary 30E05, 44A60,  46A63, 47A57; Secondary, 46E10, 46E40, 46G10}

\keywords{Stieltjes moment problem, weighted spaces of (vector-valued) smooth functions, linear topological invariants, vector-valued integration}

\begin{abstract}
We characterize the sequences of complex numbers  $(z_{n})_{n \in \N}$ and the  locally complete $(DF)$-spaces $E$ such that for each  $(e_{n})_{n \in \N} \in E^\N$  there exists an $E$-valued function $\mathbf{f}$ on $(0,\infty)$ (satisfying a mild regularity condition) such that
$$
\int_{0}^{\infty} t^{z_{n}} \mathbf{f}(t) dt = e_{n}, \qquad \forall n \in \N,
$$
where the integral should be understood as a Pettis integral.  Moreover, in this case,  we show that there always exists a solution $\mathbf{f}$ that is smooth on $(0,\infty)$ and satisfies certain optimal growth bounds near $0$ and $\infty$. 
The scalar-valued case $(E = \C)$ was treated by Dur\'an [Math. Nachr. \textbf{158} (1992), 175--194]. Our work is based upon his result. 
\end{abstract}

\maketitle

\section{Introduction and main results}
Moment-type problems are a classical topic in analysis that go back to the pioneering work of Stieltjes \cite{S-RecherchFracCont} from 1894. Boas \cite{B-StieltjesMomProbFuncBoundedVar} and P\'olya \cite{P-IndeterminThProbMom} in 1939  independently solved the following (unrestricted) Stieltjes moment problem: For each sequence $(a_{n})_{n \in \N} \in \C^{\N}$ there exists a function $F$ of bounded variation on $(0,\infty)$ such that
	\[ \int_{0}^{\infty} t^{n} dF(t) = a_{n} , \qquad \forall n \in \N. \]
In 1989 Dur\'an  \cite{D-StieltjesMomProbRapidDecrFunc} greatly improved this result by showing that for each sequence $(a_{n})_{n \in \N} \in \C^{\N}$ the infinite system of linear equations 
	\[ \int_{0}^{\infty} t^{n} f(t)dt = a_{n} , \qquad \forall n \in \N, \]
admits a solution $ f \in \S(0, \infty)$ (= the space of rapidly decreasing smooth functions with
support in $[0, \infty)$). The original proof of Dur\'an was constructive. His result also follows from a short non-constructive argument via Eidelheit's theorem \cite{E-ThSystLinGleichung} (see also \cite[Theorem 26.27]{M-V-IntroFuncAnal}). We refer to \cite{C-C-K-StieltjesMomProbSolGSSp, D-SolStieltjesMomProbGSSp, E-V-GenStieltjesMomProbRapidDecrSmoothFunc, L-S-LinContOpStieltjesMomProbGSSp} for recent works related to the Stieltjes moment problem.

In \cite{D-StieltjesMomentProbComplexExp} Dur\'an considered the following Stieltjes moment problem  with general exponents: For which sequences of complex numbers $(z_{n})_{n \in \N}$ does it hold that for every sequence $(a_{n})_{n \in \N} \in \C^{\N}$ there exists a measurable function $f$ on $(0,\infty)$ such that
	\[ \int_{0}^{\infty} t^{z_n}f(t) dt = a_n , \qquad \forall n \in \N? \]
If this holds, does there always exist a  solution $f$ that is smooth on $(0,\infty)$ and satisfies certain optimal growth bounds near $0$ and $\infty$? Dur\'an gave a complete solution to this problem. We need to introduce two notions to state his result. Set $\R_{+} = (0,\infty)$. Let $\alpha, \beta \in \R \cup \{ -\infty, \infty\}$  with $\beta < \alpha$. We define $\S(\alpha, \beta)$ as the Fr\'echet space consisting of all $f \in C^{\infty}(\R_{+})$ such that for all $\gamma \in (\beta, \alpha)$ and $n \in \N$ it holds that
	\[ \| f \|_{\gamma,n} :=  \max_{0 \leq m \leq n}\sup_{t > 0} t^{\gamma + m + 1} |f^{(m)}(t)|  < \infty . \]
Let $(z_{n})_{n \in \N}$ be  a sequence of complex numbers and set $\alpha = \sup_{n \in \N} \Re e \, z_{n}$ and $ \beta =\inf_{n \in \N} \Re e \, z_{n}$. The sequence $(z_{n})_{n \in \N}$   is said to satisfy the condition $(\condS)$ if  $z_{n} \neq z_{m}$ for all $n \neq m$, and if one of the following conditions holds:
			\begin{itemize}
				\item $\alpha$ is the unique  accumulation point of $(\Re e \,  z_{n})_{n \in \N}$ and $\Re e \,  z_{n} \neq \alpha $ for all $n \in \N$.  
				\item $\beta$ is the unique  accumulation point of $(\Re e \,  z_{n})_{n \in \N}$ and $\Re e \,  z_{n} \neq \beta$ for all $n \in \N$.  
				\item $(\Re e \,  z_{n})_{n \in \N}$ has exactly two accumulation points, which are $\alpha$ and $\beta$, and $\Re e \,  z_{n} \neq \alpha, \beta$ for all $n \in \N$.
			\end{itemize}

\noindent  Dur\'an's solution to the Stieltjes moment problem  with general exponents now reads as follows:
	\begin{thm}[{\cite[Theorem 1.3]{D-StieltjesMomentProbComplexExp}}]
		\label{t:ClassicStieltjesMomentProblem}
		Let $(z_{n})_{n \in \N}$ be a sequence of complex numbers and set $\alpha = \sup_{n \in \N} \Re e \, z_{n}$ and $\beta = \inf_{n \in \N} \Re e \, z_{n}$. The following statements are equivalent:
			\begin{itemize}
				\item[$(i)$] $(z_{n})_{n \in \N}$ satisfies $(\condS)$.
				\item[$(ii)$] For every $(a_{n})_{n \in \N} \in \C^\N$ there exists a function $f: \R_{+} \to \C$ such that $t^{z_{n}} f(t) \in L^1(\R_+)$, $n \in \N$,   and
					\begin{equation}
	 					\int_{0}^{\infty} t^{z_{n}} f(t) dt = a_n, \qquad \forall n \in \N.
						\label{mpge}
					\end{equation}
				\item[$(iii)$] For every $(a_{n})_{n \in \N} \in \C^\N$ there exists  $f \in \S(\alpha, \beta)$ that satisfies \eqref{mpge}.
			\end{itemize}
	\end{thm}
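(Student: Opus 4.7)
The direction $(iii) \Rightarrow (ii)$ is immediate, so my plan concentrates on the two nontrivial implications: prove $(i) \Rightarrow (iii)$ via Eidelheit's theorem applied to a carefully chosen Fr\'echet subspace of $\S(\alpha, \beta)$, and $(ii) \Rightarrow (i)$ via a complex-analytic rigidity argument on the Mellin transform.

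For $(i) \Rightarrow (iii)$, I would apply Eidelheit's theorem (in the form of \cite[Theorem 26.27]{M-V-IntroFuncAnal}) to the map $T : F \to \C^{\N}$, $f \mapsto (\ell_n(f))_{n \in \N}$ with $\ell_n(f) := \int_{0}^{\infty} t^{z_n} f(t) \, dt$, taking $F = \S(\alpha', \beta') \subset \S(\alpha, \beta)$ where $\alpha' = \alpha$ when $\alpha$ is an accumulation point of $(\Re e \, z_n)$ (cases $1$ and $3$ of $(\condS)$) and $\alpha' > \alpha$ otherwise (case $2$, when $\alpha$ is attained but not an accumulation point), with the symmetric choice for $\beta'$. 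This choice places every $\Re e \, z_n$ strictly between $\beta'$ and $\alpha'$, so the seminorm bound $|f(t)| \le C_\gamma t^{-\gamma-1}$ valid on $\S(\alpha', \beta')$ for any $\gamma \in (\beta', \alpha')$ (combined with splitting the integral at $t = 1$) yields continuity of each $\ell_n$ on $F$. Taking a fundamental system $(p_k)_k$ of seminorms of $F$ built from parameters $\gamma_k^+ \searrow \beta'$ and $\gamma_k^- \nearrow \alpha'$, $\ell_n$ is continuous with respect to $p_k$ iff $\gamma_k^+ < \Re e \, z_n < \gamma_k^-$, and $(\condS)$ makes this set of $n$ finite for each $k$: by the choice of $\alpha'$ and $\beta'$, no accumulation point of $(\Re e \, z_n)$ lies inside $(\gamma_k^+, \gamma_k^-)$. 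Combined with linear independence of $(\ell_n)$ (from pairwise distinctness of $(z_n)$, via uniqueness of Mellin transforms), Eidelheit's theorem delivers surjectivity of $T$, producing $f \in F \subset \S(\alpha, \beta)$ that solves the moment system.

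For $(ii) \Rightarrow (i)$ I would argue the contrapositive. If $(\condS)$ fails while $(z_n)$ remains pairwise distinct, then $(\Re e \, z_n)$ either admits an accumulation point $c$ with $\beta < c < \alpha$ strictly, or one of $\alpha, \beta$ is simultaneously attained and an accumulation point of $(\Re e \, z_n)$. For any measurable $f$ with $t^{z_n} f \in L^1(\R_+)$ for all $n$, H\"older's inequality yields log-convexity of $s \mapsto \int_0^\infty t^s |f(t)| \, dt$, so $t^s f \in L^1(\R_+)$ for every $s$ in the convex hull of $\{\Re e \, z_n\}$. Hence $F(w) := \int_0^\infty t^w f(t) \, dt$ is holomorphic on $\{\beta < \Re e \, w < \alpha\}$, continuous up to any attained boundary point of the strip, and satisfies $F(z_n) = a_n$. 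A subsequence $(z_{n_k})$ with $z_{n_k} \to z^*$ lying in the domain of holomorphy determines $F$ globally on the strip (identity theorem) and forces every $a_m = F(z_m)$; a subsequence converging to a boundary point $z^*$ at which $F$ is continuous forces $a_{n_k} \to F(z^*)$. Both scenarios impose nontrivial compatibility constraints that a generic $(a_n) \in \C^\N$ violates, contradicting $(ii)$.

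The main technical difficulty I expect is arranging Eidelheit's hypotheses in $(i) \Rightarrow (iii)$ uniformly across the three configurations of $(\condS)$: since $\ell_n$ with $\Re e \, z_n$ at an attained endpoint of $[\beta, \alpha]$ is not continuous on $\S(\alpha, \beta)$ itself, one cannot apply Eidelheit directly on that space. Passing to $\S(\alpha', \beta')$ with the strip strictly enlarged on the attained side is the correct remedy: it restores continuity of every $\ell_n$ while preserving the finite-crowding condition, because $(\condS)$ keeps the accumulation set of $(\Re e \, z_n)$ anchored at $\{\alpha, \beta\}$.
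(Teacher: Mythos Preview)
The paper does not give its own proof of this theorem; it is cited from Dur\'an \cite[Theorem 1.3]{D-StieltjesMomentProbComplexExp} and used as an input throughout. So there is no in-paper argument to compare against, and what follows evaluates your outline on its own merits. Your plan for $(i) \Rightarrow (iii)$ is essentially correct: enlarging the strip to $\S(\alpha',\beta')$ on the attained-but-not-accumulated side and then applying Eidelheit is the right idea. Two small points deserve more care: the Eidelheit criterion of \cite[Theorem~26.27]{M-V-IntroFuncAnal} is not ``linear independence plus only finitely many $\ell_n$ are $p_k$-continuous'', but the stronger statement that every finite combination $\sum_j \lambda_j \ell_{z_j}$ lying in the $p_k$-dual must have $\lambda_j = 0$ for all $j$ with $\Re e\, z_j \notin (\gamma_k^+,\gamma_k^-)$; and you use the ``only if'' half of your $p_k$-continuity characterisation without producing witnessing functions. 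Both are standard to supply.

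Your argument for $(ii) \Rightarrow (i)$, however, has a genuine gap. The failure of $(\condS)$ gives an accumulation point of the \emph{real parts} $(\Re e\, z_n)$, not of the full sequence $(z_n)$ in $\C$. You silently pass from ``$\Re e\, z_{n_k} \to c$'' to ``$z_{n_k} \to z^*$'', but if the imaginary parts are unbounded no such $z^*$ exists, and neither the identity theorem nor your boundary-continuity argument applies. A concrete instance: $z_0 = 0$ and $z_n = -\tfrac{1}{n} + in$ for $n \ge 1$; then $\alpha = 0$ is both attained and an accumulation point of $(\Re e\, z_n)$, so $(\condS)$ fails, yet $(z_n)_{n\ge 1}$ has no limit point in the closed strip. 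The actual obstruction to $(ii)$ in such cases is of Riemann--Lebesgue type: writing $F(\sigma + iy)$ as the Fourier transform of $x \mapsto e^{(\sigma+1)x} f(e^x) \in L^1(\R)$, one gets $F(z_{n_k}) \to 0$ whenever $\Re e\, z_{n_k}$ stays in (or converges into) the integrability range while $|\Im m\, z_{n_k}| \to \infty$, which forces $a_{n_k} \to 0$. Your contrapositive must treat this case separately (and you should also dispatch the trivial case $z_n = z_m$ for some $n \neq m$, which you set aside without comment).
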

	
A natural problem is to study whether Theorem \ref{t:ClassicStieltjesMomentProblem} may be extended to functions with values in a given locally convex space $E$.  Dur\'an showed that this is the case if $E$ is a Banach space  \cite[Theorem 3.2]{D-StieltjesMomentProbComplexExp}. More generally, this holds true if $E$ is a Fr\'echet space, as follows from the general theory of topological tensor products  (see Section \ref{sec:Eidelheit} for details). 
In this article, we study the above problem for  $E$ belonging to the class of locally convex Hausdorff spaces that are locally complete and have a fundamental sequence of bounded sets (the prime example being locally complete $(DF)$-spaces). This turns out to be much more delicate than the Fr\'echet case: Whether Theorem \ref{t:ClassicStieltjesMomentProblem} may be extended to $E$-valued functions depends crucially on the linear topological structure of the space $E$.  Let us mention that interpolation problems for (real) analytic functions with values in a sequentially complete $(DF)$-space have been studied in \cite{B-D-V-InterpolVVRealAnalFunc, B-LinTopStructClosedIdealsFAlg}. These works were one of the main motivations of the present article.

We now state and discuss our main result. We need some preparation. Let $E$ be a lcHs = (locally convex Hausdorff space). For $\alpha, \beta \in \R \cup \{ -\infty, \infty\}$  with $\beta < \alpha$ we define $\S(\alpha, \beta;E)$ as the space consisting of all $\mathbf{f} \in C^{\infty}(\R_{+};E)$ such that for all $\gamma \in (\beta, \alpha)$, $n \in \N$, and continuous seminorms $p$ on $E$ it holds that
			\[ \sup_{t > 0}  t^{\gamma + n + 1} p(\mathbf{f}^{(n)}(t))  < \infty . \]
All vector-valued integrals in this article should be understood as  Pettis integrals. A function $\mathbf{f} : \R_{+} \to E$ is said to be \emph{locally Pettis integrable} if the $E$-valued Pettis integral
$\int_K \mathbf{f}(t) dt$ exists for all compact subsets $K$ of $\R_{+}$. We are ready to state the main result of this article; its proof will be given in Section  \ref{sec:ProofMainThm}.

	\begin{thm}
		\label{t:main}
		Let $(z_{n})_{n \in \N}$ be a sequence of complex numbers and set $\alpha = \sup_{n \in \N} \Re e \, z_{n}$ and $\beta = \inf_{n \in \N} \Re e \, z_{n}$. Let $E$ be a non-zero locally complete lcHs with a fundamental sequence of bounded sets $(B_{N})_{N \in \N}$. The following statements are equivalent:
			\begin{itemize}
				\item[$(i)$] $(z_{n})_{n \in \N}$ satisfies $(\condS)$  and $E$ satisfies $(\condA)$, i.e.,
				\begin{gather*} 
			\exists N \in \N ~ \forall M \geq N ~ \forall \nu > 0 ~ \exists K \geq M, C > 0 ~ \forall r > 0 : \\
				B_{M} \subseteq r B_{K} + C r^{-\nu} B_{N} .
			\end{gather*}
				\item[$(ii)$]  For every $(e_{n})_{n \in \N} \in E^\N$ there exists a  locally Pettis integrable function $\mathbf{f}: \R_{+} \to E$ such that  
					\begin{equation}
						\label{eq:VVMomentsSol}
					 	\int_{0}^{\infty} t^{z_{n}} \mathbf{f}(t)  dt = e_n , \qquad \forall n \in \N.
					\end{equation}
				\item[$(iii)$]   For every $(e_{n})_{n \in \N} \in E^\N$ there exists $f \in S(\alpha, \beta;E)$ such that \eqref{eq:VVMomentsSol} holds.
			\end{itemize}
	\end{thm}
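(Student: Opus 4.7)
The overall plan is to establish the chain $(iii) \Rightarrow (ii) \Rightarrow (i) \Rightarrow (iii)$. The implication $(iii) \Rightarrow (ii)$ is essentially immediate: any $\mathbf{f} \in \S(\alpha,\beta;E)$ is smooth and hence continuous on $\R_{+}$, so local completeness of $E$ guarantees local Pettis integrability; the full-line moment integrals exist by hypothesis.

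For $(ii) \Rightarrow (i)$ I would split the argument into extracting $(\condS)$ and $(\condA)$ separately. To get $(\condS)$, pick a nonzero $e \in E$ and, by Hahn--Banach, $\xi \in E'$ with $\xi(e) = 1$. Given any $(a_{n}) \in \C^{\N}$, apply (ii) to $(a_{n}e)_{n \in \N}$, producing a locally Pettis integrable $\mathbf{f}$ with $\int_{0}^{\infty} t^{z_{n}} \mathbf{f}(t) dt = a_{n}e$. Then $\xi \circ \mathbf{f}$ is measurable, $t^{z_{n}} \xi(\mathbf{f}(t)) \in L^{1}(\R_{+})$ by the definition of the Pettis integral, and $\int_{0}^{\infty} t^{z_{n}} \xi(\mathbf{f}(t)) dt = a_{n}$, so Theorem~\ref{t:ClassicStieltjesMomentProblem} forces $(\condS)$. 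Extracting $(\condA)$ is considerably subtler; I would argue by contraposition, choosing at each stage $N$ a witness $e_{N} \in B_{M_{N}}$ for the failure of $(\condA)$ and using the scaling behaviour of these witnesses to construct a sequence $(e_{n}) \in E^{\N}$ that no moment solution can accommodate, the obstruction being the impossibility of reconciling the required ``$B_{K}$-small'' and ``$B_{N}$-rescaled'' decomposition.

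For the existence direction $(i) \Rightarrow (iii)$, I would use Theorem~\ref{t:ClassicStieltjesMomentProblem} together with a standard Eidelheit-style biorthogonalisation argument to produce interpolating functions $\psi_{n} \in \S(\alpha,\beta)$ with $\int_{0}^{\infty} t^{z_{m}} \psi_{n}(t) dt = \delta_{mn}$ and with quantitative control on the seminorms $\|\psi_{n}\|_{\gamma,k}$. Given $(e_{n}) \in E^{\N}$, I would then aim to realise $\mathbf{f}(t) := \sum_{n} e_{n} \psi_{n}(t)$ inside $\S(\alpha,\beta;E)$. The series does not converge for a generic sequence $(e_{n})$, and this is where $(\condA)$ enters: fixing the step $N$ from $(\condA)$, and choosing for each $n$ parameters $r_{n} > 0$ tuned to the growth of $\|\psi_{n}\|_{\gamma,k}$ against the scale $M_{n}$ in which $e_{n}$ lives, one obtains decompositions $e_{n} = e_{n}' + e_{n}''$ with $e_{n}' \in r_{n} B_{K_{n}}$ and $e_{n}'' \in C r_{n}^{-\nu} B_{N}$. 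The two series $\sum e_{n}' \psi_{n}(t)$ and $\sum e_{n}'' \psi_{n}(t)$ then converge in separate Banach-space steps of $\S(\alpha,\beta;E)$, and their sum furnishes the desired $\mathbf{f}$.

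The main obstacle is the convergence step in $(i) \Rightarrow (iii)$: matching Dur\'an's quantitative scalar bounds against the bounded-set filtration of $E$ uniformly over all seminorm families requires careful bookkeeping, and $(\condA)$ is precisely the topological invariant that makes such a choice of parameters $r_{n}$ possible. Conceptually, since $\S(\alpha,\beta)$ is nuclear one may identify $\S(\alpha,\beta;E) \cong \S(\alpha,\beta) \,\varepsilon\, E$, and the theorem then asserts that the short exact sequence $0 \to \ker \Phi_{0} \to \S(\alpha,\beta) \to \omega \to 0$ (where $\Phi_{0}$ is the scalar moment map and $\omega = \C^{\N}$) remains right-exact after $\varepsilon$-tensoring with $E$ exactly under $(\condA)$; this Vogt-type splitting viewpoint would also clarify the extraction of $(\condA)$ in $(ii) \Rightarrow (i)$.
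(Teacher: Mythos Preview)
Your approach diverges from the paper's in the two hard directions. The paper does not attempt a direct series construction for $(i) \Rightarrow (iii)$; instead it invokes an abstract characterization (its Theorem~\ref{t:Eid}, from the authors' earlier work): if $F$ is a nuclear Fr\'echet space with $(\uDN)$ and $(x'_n) \subseteq F'$ is an Eidelheit sequence whose kernel satisfies $(\Omega)$, then the associated $E$-valued sequence is Eidelheit if and only if $E$ satisfies $(\condA)$. To apply this with $F = \S(\alpha,\beta)$ and $x'_n = \M_{z_n}$, the paper (a) shows $\S(\alpha,\beta)$ is nuclear with $(\uDN)$ and $(\Omega)$ via a K\"othe sequence-space representation, and (b) verifies that $\ker Q$ inherits $(\Omega)$ by equipping $\S(\alpha,\beta)$ with the Mellin convolution $*_M$, observing that it is then an $m$-convex Fr\'echet algebra on which each $\M_z$ is multiplicative, and appealing to Braun's theorem. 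This last point is the key structural input you are missing: $(\Omega)$ does \emph{not} in general pass to closed subspaces, and an Eidelheit kernel need not have $(\Omega)$ even when the ambient space does; the algebra structure is what rescues it. Your final paragraph about the $\varepsilon$-tensor/splitting viewpoint is in fact the paper's actual route, not merely a conceptual gloss.

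Your direct argument for $(i) \Rightarrow (iii)$ has a genuine gap: the one-shot decomposition $e_n = e'_n + e''_n$ with $e'_n \in r_n B_{K_n}$ and $e''_n \in C r_n^{-\nu} B_N$ cannot yield convergence of $\sum e'_n \psi_n$, because the indices $K_n$ depend on $M_n$ and hence on $n$, so the terms $e'_n$ do not sit in any common bounded set of $E$. The actual proofs of such surjectivity results (as in Vogt's theory) require an iterated Mittag-Leffler procedure rather than a single cut, and it is precisely the interplay of $(\uDN)$ for $F$, $(\Omega)$ for $\ker Q$, and $(\condA)$ for $E$ that makes the iteration close up---none of which your direct sketch touches. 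Your contrapositive extraction of $(\condA)$ from $(ii)$ is likewise too vague to assess. The paper avoids both difficulties by first upgrading $(ii)$ to $(iii)$ via Mellin convolution against a fixed $\psi \in \S(\alpha,\beta)$ with $\M_{z_n}(\psi) = 1$ for all $n$ (so that $\mathbf{f} *_M \psi \in \S(\alpha,\beta;E)$ solves the same moment problem), and then reading off both $(\condS)$ and $(\condA)$ from the abstract equivalence $(i) \Leftrightarrow (iii)$.
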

	
\noindent The linear topological invariant $(\condA)$ was introduced by Vogt  \cite{V-VektDistRandHolomFunk} and plays a pivotal role in the study of various vector-valued problems in analysis, see \cite{B-LinTopStructClosedIdealsFAlg} for interpolation problems for analytic functions and \cite{B-D-ParamDepSolDiffEqSpDistSplitSES, Vogt1983-2} for the surjectivity of PDO on spaces of smooth functions and distributions.  The condition $(\condA)$ is closely related to the well-known property $(\DN)$ for Fr\'echet spaces \cite{M-V-IntroFuncAnal}, e.g., a reflexive $(DF)$-space satisfies $(\condA)$ if and only if its strong dual satisfies $(\DN)$. Hence, duals of power series spaces of infinite type (in particular, the space of tempered distributions $\mathcal{S}'(\R^d)$) satisfy $(\condA)$, whereas duals of power series spaces of finite type (in particular, the space of holomorphic germs on a compact set $K$ in $\C^d$ that is the closure of a  bounded Reinhardt domain \cite[Theorem 5.5]{THD}) do not.

One of the main motivations for studying vector-valued problems is the question of solving equations depending on a parameter (the problem of parameter dependence). In our setting, this question reads as follows: Let $(z_{n})_{n \in \N}$ be a sequence of complex numbers satisfying $(\condS)$. Set $\alpha = \sup_{n \in \N} \Re e \, z_{n}$ and $\beta = \inf_{n \in \N} \Re e \, z_{n}$.  Let $(a_{\lambda})_{\lambda} = (a_{n,\lambda})_{n \in \N, \lambda}$ be a family in $\C^\N$ depending ``nicely" on the parameter $\lambda$ (e.g. smoothly, holomorphically, in a weighted discrete way, etc.). Does there exist a family $(f_{\lambda})_{\lambda} \subseteq S(\alpha, \beta)$ depending on $\lambda$ in the same fashion  as $(a_{\lambda})_{\lambda}$ such that
					$$ 
						\int_{0}^{\infty} t^{z_{n}} f_\lambda(t)  dt = a_{n,\lambda} , \qquad \forall n \in \N, \lambda?
					$$
This question is equivalent to $(iii)$ of Theorem \ref{t:main}  with $E$  an appropriately chosen function space corresponding to the type of parameter dependence under consideration. Hence, Theorem \ref{t:main} and the fact that $(iii)$ of this result always holds if $E$ is a Fr\'echet space (cf.\ Section \ref{sec:Eidelheit})  lead to various concrete instances of the problem of parameter dependence for the Stieltjes moment problem with general exponents. The following result is an illustration of this; we will show it in Section  \ref{sec:ProofMainThm} by using Theorem \ref{t:main}.

	\begin{thm}
		\label{t:MainResultSeqSp}
		Let $(z_{n})_{n \in \N}$ be a sequence of complex numbers satisfying $(\condS)$. Set $\alpha = \sup_{n \in \N} \Re e \, z_{n}$ and $\beta = \inf_{n \in \N} \Re e \, z_{n}$. 
		Let $\Lambda$ be some index set and let $(\omega_j)_{j \in \N}$ be a sequence of functions on $\Lambda$ such that  $0  < \omega_{j+1}(\lambda) \leq \omega_{j}(\lambda)$ for all $j \in \N$ and $\lambda \in \Lambda$.
		The following statements are equivalent:
			\begin{itemize}
							\item[$(i)$] $(\omega_j)_{j \in \N}$ satisfies		
							\begin{equation}
						\label{eq:AParamDep}
						\exists j \in \N ~ \forall k \geq j ~ \exists l \geq k, C > 0 ~ \forall \lambda \in \Lambda : \,
						\omega_j(\lambda) \omega_l(\lambda) \leq C \omega_k(\lambda)^{2} .
					\end{equation}	
					
					\item[$(ii)$] For every sequence $(c_{n, \lambda})\in \C^{\N \times \Lambda}$ satisfying
					\begin{equation}
						\label{eq:SeqParamDepCond} 
						\forall n \in \N ~ \exists j \in \N ~: ~\sup_{\lambda \in \Lambda} |c_{n, \lambda}|  \omega_j(\lambda) < \infty 
					\end{equation}
				there exist  measurable functions $f_{\lambda}: \R_+ \to \C$, $\lambda \in \Lambda$,  such that
				$$
				\forall n \in \N ~ \exists j \in \N ~:~  \sup_{\lambda \in \Lambda} \int_{0}^{\infty} |t^{z_{n}} f_{\lambda}(t)| dt  \, \omega_j(\lambda)  < \infty 
				$$
				and 
					\begin{equation}
						\label{seq-interpol}  \int_{0}^{\infty} t^{z_{n}} f_{\lambda}(t) dt = c_{n, \lambda} , \qquad \forall n \in \N, \lambda \in \Lambda . 
						\end{equation}
				\item[$(iii)$] For every sequence $(c_{n, \lambda})\in \C^{\N \times \Lambda}$ satisfying \eqref{eq:SeqParamDepCond} 
					there exist  $f_{\lambda} \in \S(\alpha, \beta)$, $\lambda \in \Lambda$, such that
					\begin{equation}
						\label{charvv}  
					\forall \gamma \in (\beta, \alpha), n \in \N ~ \exists j \in \N ~:~  \sup_{\lambda \in \Lambda} \| f
					_\lambda \|_{\gamma,n} \omega_j(\lambda) < \infty
				\end{equation}
					and \eqref{seq-interpol} holds.
					\end{itemize}
	\end{thm}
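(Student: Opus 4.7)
The plan is to reduce Theorem \ref{t:MainResultSeqSp} to Theorem \ref{t:main} applied to the weighted $(LB)$-space
\[
E := \operatorname{ind}_{j \in \N} E_j, \qquad E_j := \{c \in \C^{\Lambda} : \|c\|_{j} := \sup_{\lambda \in \Lambda} |c_{\lambda}| \omega_j(\lambda) < \infty\}.
\]
Since $\omega_{j+1} \leq \omega_j$ makes the inclusions $E_j \hookrightarrow E_{j+1}$ contractive, $E$ is a locally complete $(DF)$-space with fundamental sequence of bounded sets $(B_N)_{N \in \N}$ given by the closed unit balls of the $E_N$. The preparatory step is to match $(\condA)$ for $E$ with condition $(i)$. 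A coordinatewise splitting shows that $B_M \subseteq r B_K + C r^{-\nu} B_N$ is equivalent to the pointwise inequality $r \omega_M(\lambda)/\omega_K(\lambda) + C r^{-\nu} \omega_M(\lambda)/\omega_N(\lambda) \geq 1$ holding for all $\lambda \in \Lambda$; minimizing the left-hand side over $r > 0$ reduces the requirement for \emph{all} $r > 0$ to the pointwise bound $\omega_K(\lambda)^\nu \omega_N(\lambda) \leq C' \omega_M(\lambda)^{\nu+1}$. Setting $\nu = 1$ recovers \eqref{eq:AParamDep}, and iterating \eqref{eq:AParamDep} furnishes the analogous bound for any $\nu > 0$; thus $(i)$ is equivalent to $(\condA)$ for $E$.

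For $(i) \Rightarrow (iii)$: given $(c_{n, \lambda})$ satisfying \eqref{eq:SeqParamDepCond}, the vector $e_n := (c_{n, \lambda})_\lambda$ lies in $E$, so Theorem \ref{t:main} produces $\mathbf{f} \in \S(\alpha, \beta; E)$ with $\int_0^\infty t^{z_n} \mathbf{f}(t) dt = e_n$. Each coordinate evaluation $\operatorname{ev}_\lambda : E \to \C$ is continuous, so $f_\lambda := \operatorname{ev}_\lambda \circ \mathbf{f}$ is smooth and satisfies \eqref{seq-interpol}. For \eqref{charvv}, the set $\{t^{\gamma+m+1} \mathbf{f}^{(m)}(t) : t > 0,\ 0 \leq m \leq n\}$ is bounded in $E$ and therefore contained in some $r B_j$, which translates precisely to $\sup_\lambda \omega_j(\lambda) \|f_\lambda\|_{\gamma, n} \leq r$. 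For $(iii) \Rightarrow (ii)$: since $(\condS)$ forces $\Re e \, z_n \in (\beta, \alpha)$, pick $\gamma_1 \in (\beta, \Re e \, z_n)$ and $\gamma_2 \in (\Re e \, z_n, \alpha)$, split $\int_0^\infty = \int_0^1 + \int_1^\infty$, and use $|t^{z_n} f_\lambda(t)| \leq t^{\Re e \, z_n - \gamma_i - 1} \|f_\lambda\|_{\gamma_i, 0}$ on each piece; combining with \eqref{charvv} produces the uniform weighted bound.

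The main obstacle is $(ii) \Rightarrow (i)$. The plan is to argue by contrapositive: if \eqref{eq:AParamDep} fails, then by the equivalence above $E$ violates $(\condA)$, so Theorem \ref{t:main} supplies some $(e_n) \in E^\N$ admitting no locally Pettis integrable $\mathbf{f} : \R_+ \to E$ with $\int_0^\infty t^{z_n} \mathbf{f}(t) dt = e_n$. Setting $c_{n, \lambda} := (e_n)_\lambda$ yields data satisfying \eqref{eq:SeqParamDepCond}; if a scalar solution $(f_\lambda)$ produced by $(ii)$ existed, one would reassemble $\mathbf{f}(t) := (f_\lambda(t))_\lambda$ into an $E$-valued, locally Pettis integrable function realizing the moments $e_n$, giving the desired contradiction. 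The technical heart of this direction is verifying pointwise membership $\mathbf{f}(t) \in E$ and $E$-valued convergence of the Pettis integrals $\int_K t^{z_n} \mathbf{f}(t) dt$; this should follow from the uniform $L^1$-bound in $(ii)$ (which ensures that integrals over compacta belong to a single $E_j$) together with standard properties of Pettis integration in locally complete $(DF)$-spaces, possibly after a mild mollification that preserves the moment constraints.
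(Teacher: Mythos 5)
Your setup (the weighted $(LB)$-space $E$, the identification of $(\condA)$ with \eqref{eq:AParamDep}, and the derivations of $(i)\Leftrightarrow(iii)$ and $(iii)\Rightarrow(ii)$) matches the paper's proof. The genuine gap is in how you handle $(ii)$. You argue $(ii)\Rightarrow(i)$ by contrapositive, which forces you to reassemble the scalar family $(f_\lambda)_\lambda$ into an $E$-valued, locally Pettis integrable function $\mathbf{f}(t):=(f_\lambda(t))_\lambda$. This step fails in general: condition $(ii)$ only controls the weighted $L^1$-norms $\sup_\lambda \omega_j(\lambda)\int_0^\infty |t^{z_n}f_\lambda(t)|\,dt$, and gives no pointwise control whatsoever, so for a fixed $t$ the vector $(f_\lambda(t))_\lambda$ need not belong to $E$ (take $f_\lambda$ to be tall thin spikes whose heights grow arbitrarily relative to every weight $\omega_j$ while their $L^1$-norms stay bounded). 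Even granting pointwise membership, local Pettis integrability with respect to the $(LB)$-topology of $E$ does not follow from coordinatewise integrability, since it requires $\langle e',\mathbf{f}\rangle\in L^1(K)$ for \emph{every} $e'\in E'$ and the representability of all these integrals by a single element of $E$. You flag this as "the technical heart" to be fixed "possibly after a mild mollification," but that mollification is precisely the missing content, and note that a generic mollification does \emph{not} preserve the moment constraints.

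The paper closes this direction differently: it proves $(ii)\Rightarrow(iii)$ directly, coordinatewise, bypassing any $E$-valued reassembly of the raw data. Since $(\condS)$ is a standing hypothesis, Theorem \ref{t:ClassicStieltjesMomentProblem} provides a single $\psi\in\S(\alpha,\beta)$ with $\M_{z_n}(\psi)=1$ for all $n$. One first upgrades the bounds of $(ii)$ to $\forall\gamma\in(\beta,\alpha)\ \exists j:\ \sup_\lambda\omega_j(\lambda)\|t^\gamma f_\lambda(t)\|_{L^1}<\infty$ (sandwiching $t^\gamma$ between two $t^{\Re e\, z_n}$), and then sets $g_\lambda:=f_\lambda *_M\psi$. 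The estimate $\|g_\lambda\|_{\gamma,m}\leq \|t^\gamma f_\lambda(t)\|_{L^1}\,\|\psi\|_{\gamma,m}$ from the proof of Lemma \ref{l:VVMellinConvolution} yields \eqref{charvv} uniformly in $\lambda$, while $\M_{z_n}(g_\lambda)=\M_{z_n}(f_\lambda)\M_{z_n}(\psi)=c_{n,\lambda}$ preserves the moments. This is the ingredient your proposal needs; once it is in place the detour through the contrapositive of $(ii)\Rightarrow(i)$ becomes unnecessary.
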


This article is organized as follows. In the preliminary Section \ref{sec:Preliminaries} we recall several notions and results about locally convex spaces that will be used later on. Locally Pettis integrable functions with values in a locally complete lcHs are investigated in Section \ref{sec:WeakInt}. The results from this section will be used to show the equivalence $(ii) \Leftrightarrow (iii)$ in Theorem \ref{t:main}. Next, in Section \ref{sec:Eidelheit}, we explain a result from our recent article \cite{D-N-ExtLBSpSurjTensMap} about general vector-valued Eidelheit-type problems. We will apply this result to show the equivalence $(i) \Leftrightarrow (iii)$ in Theorem \ref{t:main}.  To this end, we establish various linear topological properties of the spaces $\S(\alpha, \beta)$  and $\S(\alpha, \beta; E)$ in Section  \ref{sec:ProofMainThm}. Here, we also show Theorems \ref{t:main} and \ref{t:MainResultSeqSp}.

\section{Preliminaries}
\label{sec:Preliminaries}
Let $E$ be a lcHs. 
We denote by $E^{\prime}$ the dual of $E$. 
Given an absolutely convex bounded subset $B$ of $E$, we write $E_{B}$ for the subspace of $E$ spanned by $B$ endowed with the topology generated by the Minkowski functional of $E$. Since $E$ is Hausdorff, $E_B$ is normed. We call $B$ a \emph{Banach disk} if $E_{B}$ is a Banach space. If every bounded subset of $E$ is contained in some Banach disk, $E$ is said to be \emph{locally complete}. See \cite[Section 5.1]{BPC} and \cite[Chapter I, Section 2]{K-M} for more information on 
this condition. Each sequentially complete lcHs is locally complete \cite[Corollary 5.1.8]{BPC}.

Given two lcHs $E$ and $F$, we write $L(E, F)$ for the space of all continuous linear mappings $E \to F$.
The $\varepsilon$-product \cite{K-Ultradistributions3} of $E$ and $F$ is defined as
	\[ E \varepsilon F = L(E^{\prime}_{c}, F), \]
where the subscript $c$ indicates that we endow $E^{\prime}$ with the topology of uniform convergence on absolutely convex compact subsets of $E$.
The spaces $E \varepsilon F$ and $F \varepsilon E$ are canonically isomorphic via transposition \cite[p.~657]{K-Ultradistributions3}. 
If $E_{1}, E_{2}, F_{1}, F_{2}$ are lcHs, $T \in L(E_{1}, E_{2})$, and $S \in L(F_{1}, F_{2})$, we define the  mapping
	\[ T \varepsilon S : E_{1} \varepsilon F_{1} \to E_{2} \varepsilon F_{2} , \, R \mapsto S \circ R \circ T^{t} . \]

Let $E$ be a lcHs. For $\open \subseteq \R^{d}$ open we write $C^{\infty}(\open; E)$ for the space of $E$-valued smooth functions on $E$ \cite[Chapter 40]{T-TopVecSpDistKern}. Given $\mathbf{f} : \open \to E$ and $e' \in E'$, we define 
$$
\langle e', \mathbf{f} \rangle : U \to \C, \, x \mapsto \langle e', \mathbf{f}(x) \rangle.
$$
The following result is well-known (cf.\ \cite{BFJ}). 

	\begin{lem} 
		\label{l:WeakCm=>VVCm-1}
		Let $E$ be a locally complete lcHs and let $\open \subseteq \R^{d}$ be open. A function $\mathbf{f} : \open \rightarrow E$ belongs to $C^{\infty}(\open; E)$ if and only if $\langle e', \mathbf{f} \rangle \in C^\infty(U)$ for all $e' \in E'$. In such a case, 
		$$
		\langle e', \mathbf{f} \rangle^{(\alpha)} = \langle e', \mathbf{f}^{(\alpha)} \rangle, \qquad \forall e' \in E',  \alpha \in \N^d.
	$$
	\end{lem}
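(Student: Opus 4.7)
The forward direction is immediate: if $\mathbf{f} \in C^\infty(U; E)$, then for each $e' \in E'$ the composition $\langle e', \mathbf{f}\rangle = e' \circ \mathbf{f}$ is smooth as a continuous linear image of a smooth map, and the identity $\langle e', \mathbf{f}\rangle^{(\alpha)} = \langle e', \mathbf{f}^{(\alpha)}\rangle$ follows from the chain rule together with the linearity of $e'$.

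For the converse, the plan is to reduce to one dimension and iterate. It suffices to establish the following one-variable statement: if $\mathbf{f}: U \to E$ satisfies $\langle e', \mathbf{f}\rangle \in C^{\infty}(U)$ for all $e' \in E'$, then for every $x_0 \in U$ and each coordinate direction $e_j$, the partial derivative $\partial_j \mathbf{f}(x_0) \in E$ exists, satisfies $\langle e', \partial_j \mathbf{f}(x_0)\rangle = \partial_j \langle e', \mathbf{f}\rangle(x_0)$, and depends continuously on $x_0$. Since $\partial_j \mathbf{f}$ is then again weakly $C^\infty$, induction on the order of differentiation combined with the standard criterion for $C^\infty$ via iterated continuous partial derivatives yields the full result.

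To carry this out, fix $x_0 \in U$, a direction $j$, and a compact convex neighbourhood $K$ of $x_0$ in $U$. For small $h_0 > 0$, the collection of first- and second-order (symmetric) difference quotients of $\mathbf{f}$ based at points of $K$ with step at most $h_0$ is weakly bounded in $E$: for every fixed $e' \in E'$ these are scalar difference quotients of the $C^\infty$ function $\langle e', \mathbf{f}\rangle$, which are bounded on the compact set $K$. By Mackey's theorem this set is bounded in $E$, and by local completeness it is contained in some Banach disk $B \subset E$, so that $E_B$ is a Banach space with norm $\|v\|_{E_B} = \sup_{e' \in B^\circ}|\langle e', v\rangle|$. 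Letting $h \to 0$ in the second-order quotients yields $\sup_{e' \in B^\circ} \sup_{x \in K}|\partial_j^2 \langle e', \mathbf{f}\rangle(x)| \le 1$.

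The key step is showing that the first-order quotients $\Delta_h := (\mathbf{f}(x_0 + h e_j) - \mathbf{f}(x_0))/h$ form a Cauchy net in $E_B$ as $h \to 0$. From the scalar identity $\langle e', \Delta_h\rangle = \int_0^1 \partial_j \langle e', \mathbf{f}\rangle(x_0 + the_j)\, dt$ and the mean value theorem applied to the $C^1$ function $\partial_j \langle e', \mathbf{f}\rangle$, one obtains $|\langle e', \Delta_{h_1} - \Delta_{h_2}\rangle| \le \tfrac{1}{2}|h_1 - h_2|$ for every $e' \in B^\circ$, hence $\|\Delta_{h_1} - \Delta_{h_2}\|_{E_B} \le \tfrac{1}{2}|h_1 - h_2|$. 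Thus $\Delta_h$ converges in $E_B$ to some $\partial_j \mathbf{f}(x_0) \in E$, and passing to the limit in $\langle e', \Delta_h\rangle \to \partial_j \langle e', \mathbf{f}\rangle(x_0)$ produces the derivative identity; the same Lipschitz bound, applied with varying base point, yields continuity of $\partial_j \mathbf{f}$ into $E_B$ and hence into $E$. The principal obstacle is precisely this weak-to-strong upgrade---promoting pointwise weak convergence of difference quotients to norm convergence in the Banach space $E_B$---for which the local completeness of $E$, supplying the enclosing Banach disk $B$, is indispensable.
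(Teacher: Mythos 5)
The paper does not actually prove this lemma---it records it as well-known and cites \cite{BFJ} (see also \cite[Theorem 2.14]{K-M})---so there is no in-paper argument to compare against; your proof is a correct, self-contained rendition of the standard argument behind those references: Mackey's theorem plus local completeness places the family of first- and second-order difference quotients in a Banach disk $B$, the second-order bound yields the Lipschitz--Cauchy estimate $\|\Delta_{h_1}-\Delta_{h_2}\|_{E_B}\leq\tfrac{1}{2}|h_1-h_2|$, and completeness of $E_B$ produces the derivative, after which induction on the order finishes the proof. Two small points to tidy: choose $B$ closed (possible in a locally complete space by passing to the closed absolutely convex hull) so that $\|\cdot\|_{E_B}=\sup_{e'\in B^{\circ}}|\langle e',\cdot\rangle|$ via the bipolar theorem, and note that the continuity of $\partial_j\mathbf{f}$ on $K$ needs either the continuity of $x\mapsto\Delta_h(x)$ into $E_B$ (which follows from the first-order quotient bounds, since they make $\mathbf{f}$ Lipschitz into $E_B$) or the inclusion of mixed second-order quotients in the weakly bounded family.
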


\begin{rem}
The following converse to Lemma \ref{l:WeakCm=>VVCm-1} holds \cite[Theorem 2.14]{K-M}: \emph{Let $E$ be a lcHs. Suppose that every $\mathbf{f} : \R \rightarrow E$ satisfying $\langle e', \mathbf{f} \rangle \in C^\infty(\R)$ for all $e' \in E'$ belongs to $C^\infty(\R;E)$. Then, $E$ is locally complete.} 
\end{rem}

\section{Locally Pettis integrable functions}
\label{sec:WeakInt}

Let  $E$ be a lcHs and let $U \subseteq \R^{d}$ be measurable. A function $\mathbf{f} : U \rightarrow E$ is called \emph{scalarly integrable} if $\ev{e'}{\mathbf{f}} \in L^{1}(U)$  for all $e' \in E^{\prime}$. A scalarly integrable function $\mathbf{f}:  U \rightarrow E$ is said to be \emph{Pettis integrable} if there exists $e \in E$ such that
	\[ \ev{e'}{e} = \int_U \ev{e'}{\mathbf{f}(x)} dx , \qquad \forall e' \in E^{\prime}. \]
In such a case, the element $e$ is unique (as $E$ is Hausdorff). We write  $e = \int_{U} \mathbf{f}(x) dx$ and call  it the  \emph{Pettis integral} of $\mathbf{f}$.  
		
		Let $E$ be a lcHs and let $\open \subseteq \R^d$ be open. A function $\mathbf{f}: \open \to E$ is said to be \emph{locally Pettis integrable} if for each $K \subseteq U$ compact the Pettis integral $\int_K \mathbf{f}(x) dx$
		exists (cf.\ the introduction). The following result is well-known, it follows from  \cite[Proposition 2]{BJM} and \cite[Theorem 3.27]{Rudin}.		
	\begin{lem}
		\label{c:WeakC1BornCont}
		Let $E$ be a locally complete lcHs and let $\open \subseteq \R^{d}$ be open.
		Let $\mathbf{f}: \open \to E$ be scalarly continuously differentiable, i.e., $\ev{e'}{\mathbf{f}} \in C^1(U)$ for all $e' \in E'$. Then, $\mathbf{f}: \open \to E$ is locally Pettis integrable.
	\end{lem}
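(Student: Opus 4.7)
The plan is to fix a compact set $K \subseteq \open$ and reduce the existence of $\int_K \mathbf{f}(x) dx$ in $E$ to Pettis integration in an auxiliary Banach space furnished by local completeness. The end goal is to produce an element $e_K \in E$ such that $\langle e', e_K \rangle = \int_K \langle e', \mathbf{f}(x) \rangle dx$ for every $e' \in E'$.

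The first step is to invoke \cite[Proposition 2]{BJM} on the scalarly $C^1$ function $\mathbf{f}$. This should deliver both that $\mathbf{f}(K)$ is bounded in $E$ and that $\mathbf{f}|_K$ is norm-continuous when viewed into a suitable Banach disk. Combined with the local completeness of $E$, I then pick an absolutely convex closed Banach disk $B \subseteq E$ with $\mathbf{f}(K) \subseteq B$, so that $E_B$ is a Banach space embedding continuously into $E$ and $\mathbf{f}|_K : K \to E_B$ is norm-continuous.

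The second step is to apply \cite[Theorem 3.27]{Rudin} inside the Banach space $E_B$ to the continuous map $\mathbf{f}|_K$ on the compact set $K$ equipped with Lebesgue measure. This produces an element $e_K$ lying in the closed convex hull of $\mathbf{f}(K)$ in $E_B$ and satisfying
$$ \langle \varphi, e_K \rangle = \int_K \langle \varphi, \mathbf{f}(x) \rangle dx, \qquad \forall \varphi \in (E_B)'. $$
Since the inclusion $E_B \hookrightarrow E$ is continuous, every $e' \in E'$ restricts to an element of $(E_B)'$, and applying the identity above to such restrictions gives exactly the Pettis identity in $E$. As $K$ was arbitrary, $\mathbf{f}$ is locally Pettis integrable.

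The only genuinely non-routine point is the first step: upgrading the pointwise scalar $C^1$ information into a single Banach disk that receives $\mathbf{f}|_K$ continuously in its norm. Scalar continuity alone would not suffice, since scalarly continuous maps need not be continuous into $E$; the $C^1$ hypothesis (via locally uniformly controlled scalar difference quotients) together with local completeness is what allows one to simultaneously bound and norm-control $\mathbf{f}|_K$ inside a single Banach disk. This is precisely the content of \cite[Proposition 2]{BJM}.
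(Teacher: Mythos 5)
Your argument is correct and is precisely the proof the paper has in mind: the lemma is stated there without proof as following from \cite[Proposition 2]{BJM} (scalar $C^1$ plus local completeness gives local Lipschitz continuity, hence norm continuity, into a Banach disk $E_B$) and \cite[Theorem 3.27]{Rudin} (existence of the weak integral of a continuous function on a compact set with values in the Banach space $E_B$, pushed forward to $E$ via the continuous inclusion). Nothing to correct.
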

	
	\begin{rem}
	$(i)$ The following converse to Lemma \ref{c:WeakC1BornCont} holds (cf.\ \cite[Theorem 2.14]{K-M}): \emph{Let $E$ be a lcHs. If every scalarly continuously differentiable function  $\mathbf{f}: \R \to E$ is locally Pettis integrable, then $E$ is locally complete.} \\
\noindent $(ii)$  A lcHs $E$ is said to satisfy the metric convex compactness property (for short, $(mcc)$) if the closed absolutely convex hull of every metrizable compact set in $E$ is again compact.  $(mcc)$ implies local completeness \cite[Theorem 5.1.11]{BPC}. This implication is strict (cf.\ \cite[Section 14.6, Problems 105-107]{Wilansky}). If $E$ satisfies $(mcc)$, then every continuous function  $\mathbf{f}: \open \to E$, $U \subseteq \R^d$ open, is  locally Pettis integrable \cite[Theorem 3.27]{Rudin}.  Conversely, if every continuous function $\mathbf{f}: \R \to E$ is  locally Pettis integrable, then $E$ satisfies $(mcc)$ \cite[Theorem 1]{HvW}. Hence, there are locally complete lcHs $E$ such that not every continuous function $\mathbf{f}: \R \to E$ is locally Pettis integrable.	
	\end{rem}
		
Let $U \subseteq \R^d$ be open and let  $v: \open \to (0,\infty)$ be continuous. We denote by $L^\infty_{v}(\open)$ the Banach space consisting of all (equivalence classes) of measurable functions $\varphi: \open \to \C$ such that 
	\[ \| \varphi \|_{L^\infty_{v}(\open)} := \underset{x \in \open}{\operatorname{ ess \, sup}} \frac{|\varphi(x)|}{v(x)} < \infty. \]
We write $C_{0,v}(\open)$ for the space consisting of all continuous functions $\varphi: U \to \C$ such that for each $\varepsilon >0$ there is a compact set $K \subseteq \open$ such that 
	\[ \sup_{x \in \open \backslash K } \frac{|\varphi(x)|}{v(x)} \leq \varepsilon . \]

The proof of the next result is inspired by the one of \cite[Proposition 14, p.~53--54]{S-TheorieDistValeurVect}. 

	\begin{prop}
		\label{p:weak-C0}
		Let $E$ be a locally complete lcHs, let $\open \subseteq \R^d$ be open, and let $v: \open \to (0,\infty)$ be continuous. Let  $\mathbf{f}: \open \to E$ be locally Pettis integrable and suppose that $v{\mathbf{f}}: U \to E$ is scalarly integrable.  Then, the Pettis integral 
		$
	\int_U \mathbf{f}(x)\varphi(x) dx
		$
		exists for all $\varphi \in C_{0,v}(\open)$.
	\end{prop}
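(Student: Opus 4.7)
The strategy is: verify scalar integrability of $\varphi\mathbf{f}$, approximate $\varphi$ by continuous compactly supported truncations, build a Pettis integral for each truncation from local Pettis integrability, and pass to the limit using local completeness. Scalar integrability is immediate: since $\varphi \in C_{0,v}(\open)$ the quotient $\varphi/v$ is bounded, so $|\varphi\langle e',\mathbf{f}\rangle| \le \|\varphi/v\|_\infty |\langle e', v\mathbf{f}\rangle| \in L^{1}(\open)$ for every $e' \in E'$.

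Next, fix a compact exhaustion $K_n$ of $\open$ with $K_n \subseteq \operatorname{int} K_{n+1}$, together with cutoffs $\chi_n \in C(\open,[0,1])$ equal to $1$ on $K_n$ and supported in $K_{n+1}$. The key step is to exhibit $e_n := \int_\open \chi_n\varphi\,\mathbf{f}\,dx \in E$. I would approximate the continuous compactly supported function $\chi_n\varphi$ uniformly on $K_{n+1}$ by step functions $\sum_j c_{j,k}\mathbf{1}_{Q_{j,k}}$, where the $Q_{j,k}$ partition $K_{n+1}$ into compact pieces with pairwise disjoint interiors and maximal diameter tending to $0$ as $k \to \infty$ (e.g.\ by intersecting $K_{n+1}$ with a dyadic cube decomposition). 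The Riemann sums $R_k := \sum_j c_{j,k}\int_{Q_{j,k}}\mathbf{f}\,dx$ lie in $E$ by local Pettis integrability. To control them in $E$, consider the closed absolutely convex set
$$B_n = \Big\{e \in E : |\langle e', e\rangle|\le \int_{K_{n+1}}|\langle e',\mathbf{f}\rangle|\,dx \text{ for every } e' \in E'\Big\}.$$
Since $v$ is bounded below by a positive constant on $K_{n+1}$, the integral $\int_{K_{n+1}}|\langle e',\mathbf{f}\rangle|\,dx$ is finite for every $e'$, whence $B_n$ is weakly bounded, hence bounded by Mackey's theorem, and therefore a Banach disk by local completeness. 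A triangle-inequality estimate over a common refinement of the $k$-th and $l$-th partitions, combined with uniform continuity of $\chi_n\varphi$, yields $R_k - R_l \in \varepsilon_{k,l}\,B_n$ with $\varepsilon_{k,l} \to 0$, so $(R_k)$ is Cauchy in the Banach space $E_{B_n}$ and converges to some $e_n \in E$; dominated convergence applied to the dual pairing identifies $e_n$ as the truncated Pettis integral.

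To finish, perform the same construction on the whole space using the analogous
$$B = \Big\{e \in E : |\langle e', e\rangle| \le \int_\open |\langle e', v\mathbf{f}\rangle|\,dx \text{ for every } e' \in E'\Big\},$$
which is again a Banach disk. Given $\varepsilon>0$, the defining property of $C_{0,v}(\open)$ provides $N$ with $|\varphi|/v\le\varepsilon$ off $K_N$, and for $m,n\ge N$ and any $e'\in E'$,
$$|\langle e', e_n - e_m\rangle| \le \int_{\open\setminus K_N}\frac{|\varphi|}{v}|\langle e', v\mathbf{f}\rangle|\,dx \le \varepsilon\int_\open |\langle e', v\mathbf{f}\rangle|\,dx,$$
so $e_n - e_m \in \varepsilon B$. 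Hence $(e_n)$ is Cauchy in $E_B$, with limit $e \in E$, and a last application of dominated convergence yields $\langle e', e\rangle = \int_\open \varphi\langle e', \mathbf{f}\rangle\,dx$ for every $e' \in E'$, showing that $e$ is the desired Pettis integral.

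The main obstacle is the second step: local Pettis integrability only delivers integrals over compact sets, and the Riemann sums converge only scalarly out of the box. Upgrading this to convergence in $E$ is precisely where local completeness enters, through the Banach disk $B_n$ built from scalar $L^1$-control on $K_{n+1}$; the $C_{0,v}$-decay condition then propagates the convergence uniformly in $e'$ to close the argument.
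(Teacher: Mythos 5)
Your proof is correct. The engine is the same as the paper's --- scalar $L^1$ bounds against $\int_U|\langle e',v\mathbf{f}\rangle|\,dx$ produce a bounded absolutely convex set, local completeness turns it into a Banach disk, the approximants converge in the associated local Banach space, and dominated convergence identifies the limit as the Pettis integral --- but the approximation scheme is genuinely different. The paper works in one step: it puts the $\|\cdot\|_{L^\infty_v(U)}$-norm on the span $S$ of characteristic functions of compact sets, observes that $\varphi\mapsto\int_U\mathbf{f}(x)\varphi(x)\,dx$ maps the unit ball of $S$ into a bounded set (hence into a Banach disk $B$), and extends the resulting continuous operator $S\to E_B$ to $\overline{S}\supseteq C_{0,v}(U)$ using completeness of $E_B$. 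You instead run a concrete two-stage limit: Riemann sums over compact pieces to integrate the continuous compactly supported truncations $\chi_n\varphi$, then a compact exhaustion controlled by the $C_{0,v}$-decay to pass to $\varphi$ itself. Your first stage essentially re-proves by hand what the paper gets for free by taking indicators of compacta (rather than continuous truncations) as the dense approximating class; the price is length, the gain is that no abstract extension-by-density step is invoked and every limit is exhibited explicitly. One small point: you assert that the closed bounded disks $B_n$ and $B$ are themselves Banach disks, whereas the paper's stated definition of local completeness only guarantees that every bounded set is \emph{contained in} a Banach disk. The two formulations are equivalent (cf.\ \cite[Section 5.1]{BPC}), and in any case you can sidestep the issue by enclosing $B_n$ in a Banach disk $D_n$ and noting that $R_k-R_l\in\varepsilon_{k,l}B_n\subseteq\varepsilon_{k,l}D_n$ already gives Cauchyness in $E_{D_n}$.
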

	
		\begin{proof}
	Let $S \subseteq L^\infty_{v}(\open)$  be the linear span of the set consisting of the characteristic functions of all compact subsets of $U$. We endow $S$ with the norm $\| \, \cdot \, \|_{L^\infty_{v}(\open)}$. 
	 Note that the Pettis integral $\int_U \mathbf{f}(x)\varphi(x) dx$
	exists for each $\varphi \in S$. Consider the mapping
	$$
	T: S \to E, \, \varphi \mapsto \int_U \mathbf{f}(x)\varphi(x) dx.
	$$
	 Let $A$ be the unit ball in $S$. For each $e' \in E'$ it holds that 
			\[ 
				\sup_{\varphi \in A} | \ev{e'}{T(\varphi)} | 
				\leq  \sup_{\varphi \in A} \int_{\open} |\ev{e'}{\mathbf{f}(x)}| |\varphi(x)| dx 
				\leq \int_{\open}  |\ev{e'}{\mathbf{f}(x)}| v(x) dx. 
			\]
		Hence, $T(A)$ is weakly bounded and thus bounded in $E$. 
		Since $E$ is locally complete, this implies that there is a Banach disk $B \subseteq E$ such that $T: S \to E_B$ is continuous. Let $\overline{S}$ be the closure of $S$ in  $L^\infty_{v}(\open)$. There is a continuous linear mapping $\widetilde{T} : \overline{S} \to E$ such that $\widetilde{T}_{\mid S} = T$. Let $\varphi \in C_{0,v}(\open)$ be arbitrary. Since $C_{0,v}(\open) \subseteq \overline{S}$, there is a sequence $(\varphi_n)_{n \in \N} \subseteq S$ such that $\varphi_n \to \varphi$ in $ L^\infty_{v}(\open)$. Then, for  all $e' \in E'$
			\begin{align*} 
				\ev{e'}{\widetilde{T}(\varphi)} 
				&= \lim_{n \to \infty} \ev{e'}{T(\varphi_n)} 
				= \lim_{n \to \infty} \int_{\open} \ev{e'}{\mathbf{f}(x)} \varphi_n(x) dx \\
				&= \int_{\open} \ev{e'}{\mathbf{f}(x)} \varphi(x) dx, 
			\end{align*}
		where we used the Lebesgue dominated convergence theorem in the last equality. Therefore, the Pettis integral $\int_{\open} \mathbf{f}(x) \varphi(x) dx =\widetilde{T}(\varphi)$ exists.
\end{proof}

\section{Vector-valued Eidelheit sequences}
\label{sec:Eidelheit}

 Let $F$ be a Fr\'echet space. Eidelheit \cite{E-ThSystLinGleichung} (see also \cite[Theorem 26.27]{M-V-IntroFuncAnal}) characterized the sequences  $(x'_{n})_{n \in \N} \subseteq F'$ such that the infinite system of scalar-valued linear equations
	\[ \ev{x'_n}{x}  = a_n, \qquad \forall n \in \N, \]
has a solution $x \in F$ for each $(a_n)_{n \in \N} \in \C^{\N}$. In such a case, the sequence $(x'_{n})_{n \in \N}$ is called \emph{Eidelheit}. By definition, $(x'_{n})_{n \in \N}$ is Eidelheit if and only if the mapping
	\[ Q = Q_{(x'_{n})_{n \in \N}}: F \rightarrow \C^{\N} , \,  x \mapsto (\ev{x'_{n}}{x})_{n \in \N} \]
is surjective.  

\begin{rem}		\label{t:ClassicStieltjesMomentProblemEidelheit}
Let $\alpha, \beta \in \R \cup \{ -\infty, \infty\}$ with $\beta < \alpha$. For each $z \in \C$ with  $\Re e \, z \in (\beta, \alpha)$ the mapping
	\[ \M_{z} : \S(\alpha, \beta) \rightarrow \C, \quad \M_{z}(f) = \int_{0}^{\infty} t^{z} f(t) dt   \]
	is continuous. Hence, the equivalence $(i) \Leftrightarrow (iii)$ in Theorem \ref{t:ClassicStieltjesMomentProblem} may be reformulated as follows:
\emph{Let $(z_{n})_{n \in \N}$ be a sequence of complex numbers and set $\alpha = \sup_{n \in \N} \Re e \, z_{n}$ and $\beta = \inf_{n \in \N} \Re e \, z_{n}$. Then, $(\M_{z_{n}})_{n \in \N} \subseteq \S(\alpha, \beta)'$ is Eidelheit if and only if $(z_{n})_{n \in \N}$ satisfies $(\condS)$.}
\end{rem}
Let $F$ be a Fr\'echet space and let $(x'_{n})_{n \in \N} \subseteq F'$ be Eidelheit. Given a lcHs $E$, we define the \emph{associated $E$-valued sequence of  $(x'_{n})_{n \in \N}$} as 
	\[ (x'_{n} \varepsilon \id_E)_{n \in \N} \subseteq L( F \varepsilon E, E), \]
where we identified $\C \varepsilon E$ with $E$.
This sequence is called \emph{Eidelheit} if  the infinite system of $E$-valued  linear equations
	\[ \ev{x'_n \varepsilon \id_E}{x}  =e_n, \qquad \forall n \in \N, \]
has a solution $x \in F \varepsilon E$ for each $(e_n)_{n \in \N} \in E^\N$. Note that $(x'_{n} \varepsilon \id_E)_{n \in \N}$  is Eidelheit if and only if the mapping
	\[ Q \varepsilon \id_E: F \varepsilon E \rightarrow \C^\N \varepsilon E \cong E^\N \]
is surjective. 

We have the following natural problem: 
\emph{Let $F$ be a Fr\'echet space and let $(x'_{n})_{n \in \N} \subseteq F'$ be Eidelheit. Let $E$ be a lcHs. Find sufficient and necessary conditions on $E$ such that the associated $E$-valued sequence of  $(x'_{n})_{n \in \N}$ is also Eidelheit.}
	
If $F$ is a nuclear Fr\'echet space, then for each Fr\'echet space $E$ the associated $E$-valued sequence of any Eidelheit sequence $(x'_{n})_{n \in \N} \subseteq F^{\prime}$ is again Eidelheit: This follows from the equality $F \varepsilon E = F \widehat{\otimes}_\pi E$ \cite[Proposition 1.4]{K-Ultradistributions3} and the fact that the completed $\pi$-tensor product of two surjective continuous linear mappings between Fr\'echet spaces is again surjective \cite[Proposition 43.9]{T-TopVecSpDistKern}.

The above problem for lcHs $E$ with a fundamental sequence of bounded sets was studied by Vogt  \cite{V-TensProdFundDFRaumForsetz}. Recently, the authors reconsidered and extended his results  \cite{D-N-ExtLBSpSurjTensMap}.  The proof of  $(i) \Leftrightarrow (iii)$ in Theorem \ref{t:main} will be based on a result from this work. To formulate it, we need the following two linear topological invariants for Fr\'{e}chet spaces \cite{M-V-IntroFuncAnal}.

	\begin{defn}
		Let $F$ be a Fr\'{e}chet space with a fundamental increasing sequence of seminorms $(\|\cdot\|_{n})_{n \in \N}$.
			\begin{itemize}
				\item[$\bullet$] $F$ is said to satisfy $(\uDN)$ if
					\begin{gather*} 
						\exists n \in \N ~  \forall m \geq n ~ \exists k \geq m, \theta \in (0, 1), C > 0 ~\forall x \in F : \\
						\|x\|_{m} \leq C \|x\|_{n}^{\theta} \|x\|_{k}^{1 - \theta} . 
					\end{gather*}
					
				\item[$\bullet$] $F$ satisfies $(\Omega)$ if
					\begin{gather*}
						\forall n \in \N ~ \exists m \geq n ~ \forall k \geq m ~ \exists \theta \in (0, 1), C > 0 ~ \forall x' \in F^{\prime} : \\
						\|x'\|_{m}^{*} \leq C {\|x'\|_{n}^{*}}^{\theta} {\|x'\|_{k}^{*}}^{1 - \theta} ,
					\end{gather*}
				where $\|x'\|_{l}^{*} = \sup \{ |\ev{x'}{x}| \mid \|x\|_{l} \leq 1 \} \in [0,\infty]$.
			\end{itemize}
		Both conditions are independent of the choice of the fundamental increasing sequence of seminorms $(\|\cdot\|_{n})_{n \in \N}$ for $F$.
	\end{defn}

	 \begin{thm}[{\cite[Theorem 7.6]{D-N-ExtLBSpSurjTensMap}}]
		\label{t:Eid}
		Let $F$ be a nuclear Fr\'echet space satisfying $(\uDN)$. Let $(x'_n)_{n \in \N} \subseteq F'$ be an Eidelheit sequence such that 
			\[ \ker Q = \{ x \in F \, | \, \ev{x'_n}{x} = 0, \, \forall n \in \N \} \]
		satisfies $(\Omega)$. Let $E$ be a locally complete lcHs with a fundamental sequence of bounded sets. Then, the associated $E$-valued  sequence of $(x'_{n})_{n \in \N}$ is Eidelheit if and only if $E$ satisfies $(\condA)$.
	\end{thm}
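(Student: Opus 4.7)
The plan is to recast the Eidelheit property as the surjectivity of a single $\varepsilon$-product map and then to extract or supply the linear topological invariants via a quantitative lifting procedure. I would first fix the topologically exact sequence
$$
0 \to K \hookrightarrow F \xrightarrow{Q} \C^\N \to 0
$$
of nuclear Fr\'echet spaces (the kernel $K$ inherits nuclearity). Since $F$ is nuclear, $F\varepsilon E \cong F\,\widehat{\otimes}_\pi E$, and because $E$ has a fundamental sequence of bounded sets one has $\C^\N\varepsilon E \cong E^\N$ canonically. The associated $E$-valued sequence is Eidelheit if and only if $Q\varepsilon\id_E : F\varepsilon E \to E^\N$ is surjective, i.e.\ if and only if every $(e_n)\in E^\N$ is realized as $T(x'_n)=e_n$ for some $T\in L(F'_c,E)$. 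The whole theorem is therefore a lifting statement for $T$ against the prescribed data $(e_n)$.

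For the direction $(\condA)\Rightarrow$ Eidelheit, I would produce $T$ by an inductive Mittag-Leffler-style construction. Using the scalar Eidelheit property of $Q$, choose partial lifts $x_n\in F$ with $\ev{x'_k}{x_n}=\ev{x'_k}{e_n}$ for $k\le n$, where the right-hand side is first read in a fixed Banach disk $E_{B_M}$ (possible by local completeness of $E$). The $(\uDN)$ property of $F$ gives Young-type interpolation estimates $\|x\|_m\le C\|x\|_n^\theta\|x\|_k^{1-\theta}$, allowing each correction $x_{n+1}-x_n$ to be split (via $(\Omega)$ on $K$) into a piece which is small in a higher seminorm of $F$ and a piece which is bounded in a lower seminorm. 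The condition $(\condA)$ on $E$, i.e.\ $B_M\subseteq rB_K+Cr^{-\nu}B_N$ for all $r>0$, is precisely what is needed to absorb the bornological cost of these corrections uniformly, so that the formal series $\sum(T_{n+1}-T_n)$ converges in a Banach disk of $F\varepsilon E = L(F'_c,E)$. The output is the required $T$.

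For the converse Eidelheit $\Rightarrow (\condA)$, I would construct a \emph{test input} $(e_n)\in E^\N$ that forces the $(\condA)$ inequality on $E$. Given $N$ and $M\ge N$, $\nu>0$, I would use $(\uDN)$ on $F$ to produce a normalized sequence $(\xi_n)$ in the strong dual $F'_b$ whose seminorm growth is exactly calibrated to the target inequality $B_M\subseteq rB_K+Cr^{-\nu}B_N$; by composing with a change of basis that puts the Eidelheit sequence $(x'_n)$ into a ``model'' position, one builds data $e_n = c_n b$ with $b\in B_M$ and scalars $c_n$ chosen so that any lift $T\in L(F'_c,E)$ must, by local completeness, take the polar of an appropriately chosen bounded set of $F$ into some Banach disk $E_{B_K}$, and this inclusion unpacks to precisely the required $B_M\subseteq rB_K+Cr^{-\nu}B_N$ for the appropriate $K=K(M,\nu)$ and $C$.

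The main obstacle is the quantitative bookkeeping in the forward construction: one must coordinate three asymptotic scales (the seminorm hierarchy on $F$ via $(\uDN)$, the dual hierarchy on $K$ via $(\Omega)$, and the bounded-set hierarchy on $E$ via $(\condA)$) so that the series defining $T$ converges \emph{inside a single Banach disk} of $F\varepsilon E$, not merely in the weaker topology. This is where the hypothesis that $E$ be only locally complete (rather than sequentially complete or a strict $(DF)$-space) bites: the classical arguments of Vogt \cite{V-TensProdFundDFRaumForsetz} using sequential completeness must be replaced by Banach-disk arguments, pushing every limit operation into an $E_{B_N}$. Once this bookkeeping is in place, both implications follow from the same construction, read forward to build $T$ and backward to read off the $(\condA)$ inequality from an already-built $T$.
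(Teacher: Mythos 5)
A preliminary remark on the comparison itself: the paper does not prove Theorem \ref{t:Eid} --- it is imported verbatim from the authors' earlier work \cite[Theorem 7.6]{D-N-ExtLBSpSurjTensMap} and only \emph{applied} here --- so your attempt has to be judged as a free-standing argument rather than against an in-paper proof. Your opening reduction (the associated $E$-valued sequence is Eidelheit iff $Q \varepsilon \id_E : F \varepsilon E \to \C^\N \varepsilon E \cong E^\N$ is surjective) is correct and is exactly the framing the paper adopts in Section \ref{sec:Eidelheit}. Beyond that, however, what you have written is a roadmap, not a proof: in both implications the step that carries the actual content of the theorem is asserted rather than established.

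In the forward direction the partial lifts are not even well typed: for $x_n \in F$ the pairing $\ev{x'_k}{x_n}$ is a scalar while $e_n \in E$, so the condition ``$\ev{x'_k}{x_n} = \ev{x'_k}{e_n}$'' does not parse; you presumably mean partial solutions $T_n \in L(F'_c, E_{B_{M_n}})$ with $(x'_k \varepsilon \id_E)(T_n) = e_k$ for $k \leq n$, whose existence is indeed automatic from the scalar Eidelheit property together with the Banach-valued case. The theorem then lives entirely in the convergence of the telescoping corrections, and the sentence ``$(\condA)$ is precisely what is needed to absorb the bornological cost'' stands in for the whole quantitative argument: one must show that $T_{n+1} - T_n$, which a priori lies only in $\bigl(\bigcap_{k \leq n} \ker x'_k\bigr) \varepsilon E$, can be corrected by an element of $(\ker Q)\varepsilon E$ so that the resulting series is summable \emph{inside a single Banach disk} of $L(F'_c,E)$, and it is exactly here that $(\Omega)$ of $\ker Q$, $(\uDN)$ of $F$ and $(\condA)$ of $E$ must be meshed; no such estimate is supplied. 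Note also that the identification $F \varepsilon E \cong F \widehat{\otimes}_\pi E$ you invoke at the outset requires (quasi-)completeness of $E$ and is unavailable for the merely locally complete $E$ that are the point of the theorem --- a tension you acknowledge later but do not resolve. The converse is in worse shape: passing from ``a lift $T$ exists for the data $e_n = c_n b$'' to the inclusion $B_M \subseteq r B_K + C r^{-\nu} B_N$ demands uniformity in $b \in B_M$ and in $r>0$ with a \emph{single} index $K$ and constant $C$, which a pointwise existential statement cannot deliver; one needs either a test sequence encoding all of $B_M$ at once or a Grothendieck-factorization/category argument on the surjection $Q \varepsilon \id_E$, and the ``change of basis putting $(x'_n)$ into model position'' is not a defined operation. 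As it stands, neither implication is proved.
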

	
\noindent Let $F$ be a Fr\'echet space and let $(x'_{n})_{n \in \N} \subseteq F'$ be Eidelheit. In general $\ker Q$ does not satisfy $(\Omega)$ even if $F$ does so \cite{V-Mit}. Braun \cite{B-LinTopStructClosedIdealsFAlg} showed that $\ker Q$ does inherit $(\Omega)$ from $F$ under certain additional assumptions on $F$ and  $(x'_n)_{n \in \N}$. We now state his result.

Let $F$ be a Fr\'{e}chet space and let $* : F \times F \rightarrow F$ be a bilinear mapping. $(F, *)$ is called an \emph{$m$-convex Fr\'{e}chet algebra}  if there exists an increasing fundamental sequence of seminorms $(\|\cdot\|_{n})_{n \in \N}$ on $F$ such that  for all $n \in \N$
	\[ \| x * y \|_{n} \leq \|x\|_{n} \|y\|_{n} , \qquad \forall x, y \in F. \]
	
	\begin{thm}[{\cite[Theorem 4.1]{B-LinTopStructClosedIdealsFAlg}}]
		\label{l:ClosedIdealHasOmega}
	Let $(F, *)$ be an $m$-convex Fr\'{e}chet algebra. Let $(x'_n)_{n \in \N} \subseteq F'$ be an Eidelheit sequence such that $x'_n: (F, *) \to \C$ is an algebra homomorphism for each $n \in \N$. If $F$ satisfies $(\Omega)$, then so does $
	\ker Q $.
	\end{thm}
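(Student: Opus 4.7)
Choose an increasing fundamental system of seminorms $(\|\cdot\|_n)_{n \in \N}$ on $F$ witnessing the $m$-convex structure, so $\|x*y\|_n \leq \|x\|_n \|y\|_n$ for all $x, y \in F$ and $n \in \N$, and equip $J := \ker Q$ with the restricted seminorms. The plan is to verify $(\Omega)$ for $J$ by constructing a continuous linear projection $P : F \to J$ and lifting functionals through it. Given such a $P$, for any $y' \in J'$ the composition $\tilde y' := y' \circ P$ lies in $F'$ and extends $y'$; the continuity of $P$ translates into dual-seminorm estimates $\|\tilde y'\|_n^{*} \leq C_n \|y'\|_{\ell(n)}^{*}$ for a monotone shift $\ell$. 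Since trivially $\|y'\|_n^{*} \leq \|\tilde y'\|_n^{*}$ (as the unit ball of $J$ at level $n$ is contained in that of $F$), applying $(\Omega)$ of $F$ to $\tilde y'$ and absorbing the shift $\ell$ into the quantifiers of $(\Omega)$ then yields $(\Omega)$ for $J$.

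The projection is built from the character and Eidelheit structure. For each $N \in \N$, the Eidelheit property produces $e_N \in F$ with $x'_n(e_N) = 1$ for $n \leq N$ and $x'_n(e_N) = 0$ for $n > N$. Since each $x'_n$ is multiplicative, $x'_n(x - e_N * x) = x'_n(x)\bigl(1 - x'_n(e_N)\bigr)$, which vanishes for $n \leq N$, so $x - e_N * x \in \bigcap_{n \leq N}\ker x'_n$. Provided the sequence $(e_N * x)_N$ converges in $F$ for every $x$ to some $S(x)$, the assignment $P(x) := x - S(x)$ lies in $J$ and defines a continuous linear projection of $F$ onto $J$, continuity following from the closed graph theorem once pointwise convergence is established.

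The main obstacle is engineering the $e_N$ so that this convergence holds together with the required continuity bounds. Eidelheit alone only guarantees existence of $e_N$ with some $\|e_N\|_{L(N)} \leq A(N)$ at an $N$-dependent index, far too crude for summability of the telescoping series $\sum_N (e_{N+1} - e_N)*x$. The key leverage is $(\Omega)$ of $F$, an interpolation estimate on $F'$ and dually an approximation property of $F$ allowing one to trade growth at one seminorm for decay at another. Combined with the freedom to replace $e_N$ by $e_N + j$ for any $j \in J_N := \bigcap_{n \leq N}\ker x'_n$, which leaves the prescribed character values intact, this lets me rebalance the seminorms of the chosen $e_N$ level by level. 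The submultiplicative bound $\|(e_{N+1} - e_N)*x\|_n \leq \|e_{N+1} - e_N\|_n \|x\|_n$ then converts the rebalanced estimates on $e_N$ into summability of the telescoping series in every seminorm, yielding both the existence of $S$ and a bound $\|S(x)\|_n \leq C_n\|x\|_{\ell(n)}$. Coordinating these choices consistently across all levels so that $P$ is simultaneously continuous, projective onto $J$, and produces a controlled shift $\ell$ is the delicate technical core of the argument.
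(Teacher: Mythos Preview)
First, a framing remark: the paper does not prove this theorem at all; it is quoted verbatim as Braun's result and used as a black box in the proof of Theorem~\ref{t:main}. So there is no in-paper argument to compare your sketch against.

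Your proposed route has a structural obstruction that cannot be repaired. You aim to build a continuous linear projection $P:F\to J$ with $P|_J=\id_J$. The existence of such a $P$ is equivalent to the splitting of the short exact sequence $0\to J\to F\xrightarrow{Q}\C^{\N}\to 0$, i.e.\ to $F$ containing a complemented subspace isomorphic to $\omega=\C^{\N}$. But $\omega$ admits no continuous norm: every continuous seminorm on $\omega$ is dominated by some $\|a\|_k=\max_{j\le k}|a_j|$ and hence vanishes on an infinite-dimensional subspace. Consequently, as soon as $F$ carries a continuous norm, $F$ cannot contain any subspace isomorphic to $\omega$, and the projection you seek does not exist. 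In the very application this paper cares about, $F=\S(\alpha,\beta)$, every seminorm $\|\cdot\|_{\gamma,n}$ is already a norm, so your construction is impossible there; the same holds in essentially all the classical $m$-convex Fr\'echet algebras one has in mind (entire functions, weighted smooth functions, etc.).

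Concretely, the convergence you postulate cannot hold with the properties you need. If $S(x)=\lim_N e_N* x$ existed for every $x$ and $P=I-S$ were a projection onto $J$, then $R:=(Q|_{\ker P})^{-1}:\omega\to F$ would be a continuous right inverse to $Q$. Continuity forces, for each fixed level $k$, an index $n(k)$ with $\|R(a)\|_k\le C_k\max_{j\le n(k)}|a_j|$; applying this to the unit vectors $\delta_m\in\omega$ with $m>n(k)$ gives $\|R(\delta_m)\|_k=0$, hence $R(\delta_m)=0$ whenever $\|\cdot\|_k$ is a norm, contradicting $x'_m(R(\delta_m))=1$. So the ``delicate technical core'' you flag is not merely delicate: the target object does not exist. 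Braun's argument must (and does) establish the $(\Omega)$ inequality for $J$ directly, exploiting the ideal and multiplicative structure to control Hahn--Banach-type extensions of functionals level by level, without ever producing a global continuous projection.
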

	
\section{Proofs of the main results}
\label{sec:ProofMainThm}

This section is devoted to the proofs of Theorems \ref{t:main} and \ref{t:MainResultSeqSp}. We need several results in preparation.
Throughout this section we fix $\alpha, \beta \in \R \cup \{ -\infty, \infty\}$ with $\beta < \alpha$. We start by showing some linear topological properties of the space $\S(\alpha,\beta)$.

	\begin{prop}
		\label{c:S(a,b)Nuclear}		
		$\S(\alpha, \beta)$ is a nuclear Fr\'{e}chet space that satisfies $(\Omega)$ and $(\uDN)$. 	
	\end{prop}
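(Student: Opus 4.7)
The plan is to use the logarithmic substitution $t = e^{s}$ to identify $\S(\alpha,\beta)$ with a weighted space of smooth functions on $\R$, and then obtain the three properties from standard arguments for such spaces.

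First I would record the Fr\'echet structure: picking sequences $(\gamma_{k})_{k \in \N}$ and $(\widetilde\gamma_{k})_{k \in \N}$ in $(\beta,\alpha)$ with $\gamma_{k}\nearrow\alpha$ and $\widetilde\gamma_{k}\searrow\beta$ and setting $p_{k}(f) := \|f\|_{\gamma_{k},k} + \|f\|_{\widetilde\gamma_{k},k}$ gives a countable increasing fundamental system of seminorms, with completeness following from uniform convergence of derivatives on compact subsets of $\R_{+}$ together with the growth bounds passing to the limit. Next, define $Tf(s) := f(e^{s})$. Fa\`a di Bruno gives $(Tf)^{(n)}(s) = \sum_{j=1}^{n} S(n,j)\, e^{js} f^{(j)}(e^{s})$ (Stirling numbers of the second kind), and inverting this triangular relation expresses each $e^{js} f^{(j)}(e^{s})$ as a linear combination of $(Tf)^{(k)}(s)$ for $k \leq j$. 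Setting $t = e^{s}$ and comparing sup-norms shows that $T$ is a topological isomorphism of $\S(\alpha,\beta)$ onto the Fr\'echet space
\[
X := \left\{g \in C^{\infty}(\R) : \sup_{s \in \R} e^{\mu s} |g^{(n)}(s)| < \infty \text{ for all } \mu \in (\beta+1,\alpha+1),\ n \in \N \right\}.
\]

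It remains to prove that $X$ is nuclear and satisfies $(\uDN)$ and $(\Omega)$. The cleanest route is to exhibit a Hermite-type Schauder basis of $X$ adapted to the exponential weights, which identifies $X$ with a power series space $\Lambda_{\infty}(\sigma)$ of infinite type for an appropriate exponent sequence $\sigma$; such spaces are well known to be nuclear and to satisfy $(\DN)$ (hence $(\uDN)$) and $(\Omega)$ \cite[Ch.~29]{M-V-IntroFuncAnal}. If one prefers a direct route, nuclearity can be checked via a kernel estimate showing that each canonical embedding $X_{\mu_{1},n_{1}} \hookrightarrow X_{\mu_{2},n_{2}}$ with $\mu_{2} < \mu_{1}$ and $n_{2} < n_{1}$ is a nuclear operator in its weighted $L^{2}$-representation, while $(\uDN)$ reduces to the log-convexity estimate
\[
\sup_{s} e^{\mu_{2} s} |g^{(n)}(s)| \leq \left( \sup_{s} e^{\mu_{1} s} |g^{(n)}(s)| \right)^{\theta} \left( \sup_{s} e^{\mu_{3} s} |g^{(n)}(s)| \right)^{1-\theta}
\]
for $\mu_{2} = \theta \mu_{1} + (1-\theta)\mu_{3}$, combined with a Landau--Kolmogorov interpolation across the derivative order $n$; the dual condition $(\Omega)$ then follows from an analogous log-convexity for the dual seminorms.

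The main technical obstacle I expect is the careful execution of the $(\uDN)$ estimate: the weight-interpolation step alone controls only derivatives of order $\leq n_{0}$, where $n_{0}$ is the fixed base index, so the higher-order derivatives appearing in $\|f\|_{m}$ must be absorbed through a Landau--Kolmogorov bound that trades derivative order against the auxiliary weight indexed by $k$, requiring a coordinated choice of $\theta$ and $k$. Once this step is in place, nuclearity, $(\uDN)$ and $(\Omega)$ all fall into place, proving the proposition.
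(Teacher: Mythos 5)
Your first step (the logarithmic substitution) is exactly the paper's: the authors also transport $\S(\alpha,\beta)$ to a weighted space $\mathcal{K}(\alpha,\beta)$ of smooth functions on $\R$ via $f \mapsto e^{x}f(e^{x})$. The divergence, and the problem, comes after that.

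Your ``cleanest route'' rests on a claim that is false in general: for finite $\alpha$ (or finite $\beta$) the space $\S(\alpha,\beta)$ is \emph{not} isomorphic to a power series space of infinite type. Indeed, the paper identifies $\mathcal{K}(\alpha,\beta)$ (via Vogt's sequence space representation theorem) with the K\"othe space $\Lambda(\alpha,\beta)$ on $\Z\times\N$ with matrix $e^{\gamma\nu}k^{n}$, $\gamma\in(\beta,\alpha)$. When $\alpha<\infty$, the complemented sectional subspace on $\{\nu>0,\ k=1\}$ has norms $\sup_{\nu}|x_{\nu}|e^{\gamma\nu}$ with $\gamma\nearrow\alpha$, and the $(\DN)$ inequality $e^{2\gamma_{1}\nu}\leq Ce^{\gamma_{0}\nu}e^{\gamma_{2}\nu}$ forces $2\gamma_{1}\leq\gamma_{0}+\gamma_{2}<\gamma_{0}+\alpha$, which fails for $\gamma_{1}$ close to $\alpha$; this is finite-type ($\Lambda_{0}$-like) behaviour. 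Since every $\Lambda_{\infty}(\sigma)$ satisfies $(\DN)$ with the fixed exponent $1/2$ and $(\DN)$ passes to complemented subspaces, no isomorphism onto a $\Lambda_{\infty}(\sigma)$ can exist. This is precisely why the statement is formulated with $(\uDN)$ (where $\theta$ may depend on $m$, and must be taken small as $m$ grows) rather than $(\DN)$, and why the paper reduces to the mixed K\"othe space $\Lambda(\alpha,\beta)$ and then checks nuclearity via the Grothendieck--Pietsch criterion and $(\Omega)$, $(\uDN)$ by direct computation on the matrix. A Hermite-type basis argument could plausibly work in the single case $\alpha=\infty$, $\beta=-\infty$, but not for the general $\alpha,\beta$ covered by the proposition.

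Your fallback ``direct route'' names the right ingredients (log-convexity in the weight parameter, Landau--Kolmogorov across derivative orders, dual estimates for $(\Omega)$), but these are exactly the hard parts and none of them is carried out; in particular $(\Omega)$, which requires uniform lower bounds for dual norms, is dismissed in one clause. I would recommend restructuring the proof around a sequence space representation of $\mathcal{K}(\alpha,\beta)$ as a concrete K\"othe space indexed by $\Z\times\N$ (as in Vogt's representation theorems), where nuclearity, $(\Omega)$ and $(\uDN)$ become elementary inequalities on the matrix $e^{\gamma\nu}k^{n}$ --- paying attention to the fact that the achievable $\theta$ in $(\uDN)$ genuinely degenerates as $\gamma_{1}\to\alpha$ when $\alpha$ is finite.
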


	\begin{proof}
		We define $\mathcal{K}(\alpha, \beta)$ as the space consisting of all $F \in C^{\infty}(\R)$ such that 
			\[ \sup_{x \in \R} e^{\gamma x} |F^{(n)}(x)|  < \infty, \qquad \forall \gamma \in (\beta, \alpha), \, n \in \N, \]
		and endow it with its natural Fr\'echet space topology. Then,
			\[ \Phi: \S(\alpha,\beta) \to \mathcal{K}(\alpha, \beta), \quad \Phi(f)(x) = e^x f(e^x) \]
		is a topological isomorphism. We define $\Lambda(\alpha, \beta)$ as the space consisting of all $(c_{\nu,k})_{\nu \in \Z, k \in \N} \in \C^{\Z \times \N}$ such that 
			\[ \sup_{\nu \in \Z, k \in \N} |c_{\nu,k}|e^{\gamma \nu} k^n  < \infty, \qquad \forall \gamma \in (\beta, \alpha), \, n \in \N, \]
		and endow it with its natural Fr\'echet space topology. By using the exact same reasoning as in \cite[Theorem 3.1]{V-SeqSpRepSpTestFuncDist}, one can show that $\mathcal{K}(\alpha, \beta) \cong \Lambda(\alpha, \beta)$. Hence, it  suffices to show that $\Lambda(\alpha, \beta)$ is a nuclear Fr\'{e}chet space that satisfies $(\Omega)$ and $(\uDN)$.  The fact that $\Lambda(\alpha, \beta)$ is nuclear follows from the general nuclearity criterion for K\"othe sequence spaces (cf.\ \cite[Proposition 28.16]{M-V-IntroFuncAnal}), while it is standard to check that $\Lambda(\alpha, \beta)$ satisfies $(\Omega)$ and $(\uDN)$ (cf.\ \cite[Lemmas 29.11 and 29.12]{M-V-IntroFuncAnal}).
	\end{proof}

We now endow  $\S(\alpha,\beta)$ with the structure of an $m$-convex Fr\'echet algebra, which will enable us to invoke Theorem \ref{l:ClosedIdealHasOmega} in the proof of Theorem \ref{t:main}. Let $f,g : \R_+ \to \C$ be measurable. We define the \emph{Mellin convolution} of $f$ and $g$ as
	\[ f *_{M} g (t) = \int_{0}^{\infty} f(x) g\left(\frac{t}{x}\right) \frac{dx}{x}, \qquad t > 0, \]
provided that these integrals converge.

	\begin{lem}
		\label{t:S(a,b)MConvexAlgebra}
		$(\S(\alpha, \beta), *_{M})$ is an $m$-convex Fr\'{e}chet algebra and the map $\M_{z} : \S(\alpha, \beta) \rightarrow \C$ is a continuous algebra homomorphism for each $z \in \C$ with $\beta < \Re  e \,  z < \alpha$.
	\end{lem}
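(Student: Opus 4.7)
My plan is to split the proof into two parts: the continuity and homomorphism property of $\M_z$, and the construction of an $m$-convex Fr\'echet algebra structure on $(\S(\alpha, \beta), *_M)$.

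For $\M_z$, I would fix $z$ with $\beta < \Re e\, z < \alpha$ and pick $\gamma_1, \gamma_2 \in (\beta, \alpha)$ with $\gamma_1 < \Re e\, z < \gamma_2$. From the pointwise bound $|f(t)| \le \|f\|_{\gamma_j, 0}\, t^{-\gamma_j - 1}$ and splitting $\int_0^\infty = \int_0^1 + \int_1^\infty$, one gets $|\M_z(f)| \le C(\|f\|_{\gamma_1, 0} + \|f\|_{\gamma_2, 0})$, establishing continuity. The identity $\M_z(f *_M g) = \M_z(f)\M_z(g)$ follows by Fubini (justified by applying the previous estimates to $|f|, |g|$) and the substitution $t = ux$:
\[ \int_0^\infty \!\!\int_0^\infty t^z f(x) g(t/x) \frac{dx}{x}\, dt = \int_0^\infty \!\!\int_0^\infty (ux)^z f(x) g(u)\, du\, dx = \M_z(f)\M_z(g). \]

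For the algebra structure, the key observation is that the Mellin derivative $D f := t f'$ is a derivation for Mellin convolution,
\[ D(f *_M g) = (D f) *_M g = f *_M (Dg), \]
which follows by differentiating under the integral and noting $(t/x) g'(t/x) = (Dg)(t/x)$. I would then introduce the $L^1$-type seminorms
\[ q_{\gamma, n}(f) := \max_{0 \le m \le n} \int_0^\infty t^\gamma\, |(D^m f)(t)|\, dt, \qquad \gamma \in (\beta, \alpha),\ n \in \N. \]
A further application of Fubini with the substitution $t = ux$ yields the fundamental multiplicativity
\[ \int_0^\infty t^\gamma\, |(f *_M g)(t)|\, dt \le \Big(\int_0^\infty t^\gamma |f(t)|\, dt\Big)\Big(\int_0^\infty t^\gamma |g(t)|\, dt\Big), \]
which combined with the derivation property immediately gives $q_{\gamma, n}(f *_M g) \le q_{\gamma, n}(f)\, q_{\gamma, n}(g)$.

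It remains to verify that the $q_{\gamma, n}$ form an equivalent system of continuous seminorms on $\S(\alpha, \beta)$. Expanding $D^m$ as a linear combination of operators $t^k (d/dt)^k$ with $k \le m$ and splitting the integral at $t = 1$ using $|f^{(k)}(t)| \le \|f\|_{\gamma', n}\, t^{-\gamma' - k - 1}$, one obtains $q_{\gamma, n}(f) \le C(\|f\|_{\gamma_1, n} + \|f\|_{\gamma_2, n})$ for any $\gamma_1 < \gamma < \gamma_2$ in $(\beta, \alpha)$. For the reverse bound I would transport to $\mathcal{K}(\alpha, \beta)$ via the isomorphism $\Phi$ from the proof of Proposition \ref{c:S(a,b)Nuclear}, under which $D$ becomes ordinary differentiation on $\R$, and invoke the one-dimensional Sobolev inequality $\sup_x e^{\gamma x}\, |F^{(m)}(x)| \le \int_\R e^{\gamma y}\, |F^{(m+1)}(y)|\, dy$, proved by integrating $F^{(m+1)}$ from the side of $\pm\infty$ on which $e^{\gamma x}$ is small (the vanishing of $F^{(m)}$ at that end being guaranteed by membership in $\mathcal{K}(\alpha, \beta)$ since $\gamma \in (\beta, \alpha)$). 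Finally, choosing $\gamma_n^- \searrow \beta$ and $\gamma_n^+ \nearrow \alpha$, the seminorms $\|f\|_n := q_{\gamma_n^-, n}(f) + q_{\gamma_n^+, n}(f)$ form a fundamental increasing system on $\S(\alpha, \beta)$ satisfying $\|f *_M g\|_n \le \|f\|_n\, \|g\|_n$. The main technical hurdle is precisely this Sobolev-type conversion: one must ensure the vanishing at $\pm\infty$ needed to apply the fundamental theorem of calculus is indeed supplied by $\mathcal{K}(\alpha, \beta)$-membership, and that a small shift in $\gamma$ (if needed to absorb endpoint effects) does not prevent the resulting estimates from assembling into a fundamental system.
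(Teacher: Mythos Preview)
Your strategy coincides with the paper's: both replace the defining sup-seminorms by an equivalent family of $L^1$-type seminorms that are manifestly submultiplicative for $*_M$. The paper uses $\|f\|^1_{\gamma,n}=\max_{0\le m\le n}\int_0^\infty t^{\gamma+m}|f^{(m)}(t)|\,dt$ in place of your $q_{\gamma,n}$; since $D^m$ is a triangular combination of the operators $t^k(d/dt)^k$ (and conversely), the two families are equivalent, and the submultiplicativity argument is the same Fubini computation in either case.

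Two small slips to correct. First, under the paper's $\Phi(f)(x)=e^xf(e^x)$ one finds $\Phi(Df)=\Phi(f)'-\Phi(f)$, so $D$ corresponds to $d/dx-1$ rather than $d/dx$; this is harmless for your purposes, because $(d/dx-1)^m$ and $(d/dx)^k$, $k\le m$, span the same operators and hence generate equivalent seminorm systems. Second, your Sobolev inequality is missing the lower-order term produced when the derivative hits the weight: integrating $(e^{\gamma y}F^{(m)}(y))'$ gives
\[
\sup_{x}e^{\gamma x}|F^{(m)}(x)|\le|\gamma|\int_{\R}e^{\gamma y}|F^{(m)}(y)|\,dy+\int_{\R}e^{\gamma y}|F^{(m+1)}(y)|\,dy,
\]
which still yields the desired control by $q_{\gamma,n+1}$. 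The paper avoids this detour through $\mathcal{K}(\alpha,\beta)$ entirely and proves the reverse estimate directly on $(0,\infty)$ via the identity $t^{\gamma+m+1}f^{(m)}(t)=\int_0^t\big[(\gamma+m+1)s^{\gamma+m}f^{(m)}(s)+s^{\gamma+m+1}f^{(m+1)}(s)\big]\,ds$, giving $\|f\|_{\gamma,n}\le(|\gamma|+n+1)\|f\|^1_{\gamma,n}+\|f\|^1_{\gamma,n+1}$ in one line.
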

\begin{proof}
 We define $\S^1(\alpha, \beta)$ as the Fr\'echet space consisting of all $f \in C^{\infty}(\R_{+})$ such that for all $\gamma \in (\beta, \alpha)$ and $n \in \N$ it holds that
	\[ \| f \|^{1}_{\gamma,n} :=  \max_{0 \leq m \leq n}\int^\infty_0 t^{\gamma + m} |f^{(m)}(t)| dt < \infty.  \]
We now claim that $ \S(\alpha, \beta) = \S^1(\alpha, \beta)$ as locally convex  spaces.
Fix $f \in \S(\alpha, \beta)$, $n \in \N$, and $\gamma \in (\beta, \alpha)$. For all $\beta < \gamma_{1} < \gamma < \gamma_{2} < \alpha$ it holds that
	\[ \|f\|^{1}_{\gamma, n} \leq \frac{\|f\|_{\gamma_{1}, n}}{\gamma - \gamma_{1}} + \frac{\|f\|_{\gamma_{2}, n}}{\gamma_{2} - \gamma} . \]
On the other hand, we see that for all $f \in \S^1(\alpha, \beta)$, $n \in \N$, and $\gamma \in (\beta, \alpha)$
	\begin{align*} 
		\|f\|_{\gamma, n} 
		&= \max_{0 \leq m \leq n} |t^{\gamma + m + 1} f^{(m)}(t)| \\
		&= \max_{0 \leq m \leq n} |\int_{0}^{t} (\gamma + m + 1) s^{\gamma + m} f^{(m)}(s) + s^{\gamma + m + 1} f^{(m + 1)}(s) ds| \\
		&\leq (\gamma + n + 1) \|f\|^{1}_{\gamma, n} + \|f\|^{1}_{\gamma, n + 1} .
	\end{align*}
	This shows the claim.
 It holds that
	\[ \| f*_M g \|^1_{\gamma,n} \leq  \| t^\gamma f(t) \|_{L^1}  \| g \|^1_{\gamma,n} \leq \|f\|^1_{\gamma,n} \| g \|^1_{\gamma,n},  \qquad \forall \gamma \in (\beta, \alpha), n \in \N. \]
This implies that $(\S(\alpha, \beta), *_{M})$ is an $m$-convex Fr\'{e}chet algebra. The second statement is clear. 
\end{proof}

Next, we study the space $\S(\alpha, \beta; E)$ with $E$ a locally complete lcHs. Given $x \in \R_+$, we write $\delta_x \in \S(\alpha,\beta)'$ for the Dirac-delta measure concentrated at $x$, i.e.,
$$
\langle \delta_x, \varphi \rangle = \varphi(x), \qquad \varphi \in \S(\alpha,\beta).
$$
Proposition \ref{c:S(a,b)Nuclear} implies that $\S(\alpha,\beta)$ is a Fr\'echet-Schwartz space. Hence,  $\S(\alpha,\beta)'_c =  \S(\alpha,\beta)'_b$, where $b$ stands for the topology of uniform convergence over the bounded sets in $\S(\alpha,\beta)$. We simply denote this space by $\S'(\alpha,\beta)$. Note that $\S'(\alpha, \beta)$ is a $(DFS)$-space. The proof of the next result is standard, we include its proof for the sake of completeness.  
\begin{lem} Let $E$ be a locally complete lcHs.
		\label{p:VVS(alpha,beta)IfEvalInS(alpha,beta)}
		\begin{itemize}
		\item[$(i)$] A function $\mathbf{f} : \R_{+} \rightarrow E$ belongs to $\S(\alpha, \beta; E)$ if and only if $\ev{e'}{\mathbf{f}} \in \S(\alpha, \beta)$  for all $e' \in E'$.
		\item[$(ii)$] The mapping
			\begin{equation}
				\label{eq:IsomorphS(a,b;E)} 
				\Phi: \S(\alpha, \beta) \varepsilon E = L(\S'(\alpha, \beta),E) \to \S(\alpha, \beta; E), \, T \mapsto (x \mapsto T(\delta_x))
			\end{equation}
		is an isomorphism. 
		\item[$(iii)$] For every $z \in \C$ with $ \Re e \, z \in (\beta, \alpha)$ the mapping
				$$
		\M_{E, z}: \S(\alpha, \beta; E) \to E, \quad \M_{E, z}(\mathbf{f}) = \int_{0}^{\infty} t^{z} \mathbf{f}(t) dt
		$$ 
		is well-defined and, under the isomorphism \eqref{eq:IsomorphS(a,b;E)},  it holds that $\M_{E, z} = \M_{z} \varepsilon \id_{E}$. 
\end{itemize}
	\end{lem}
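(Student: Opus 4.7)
The plan is to prove (i) first, then exploit it in (ii) and (iii). For part (i), the forward direction is immediate: each $e' \in E'$ induces the continuous seminorm $|\langle e', \cdot\rangle|$ on $E$, so the defining estimate of $\S(\alpha,\beta; E)$ with this $p$ gives $\langle e', \mathbf{f}\rangle \in \S(\alpha,\beta)$. For the converse, I would invoke Lemma \ref{l:WeakCm=>VVCm-1} to obtain $\mathbf{f} \in C^\infty(\R_+; E)$ together with $\langle e', \mathbf{f}^{(n)}\rangle = \langle e', \mathbf{f}\rangle^{(n)}$; then for each fixed $\gamma \in (\beta,\alpha)$ and $n \in \N$, the set $\{t^{\gamma+n+1} \mathbf{f}^{(n)}(t) : t > 0\} \subset E$ is weakly bounded (by the hypothesis $\langle e', \mathbf{f}\rangle \in \S(\alpha,\beta)$) and thus bounded in $E$ by Mackey's theorem, which yields the required bound $\sup_t t^{\gamma+n+1} p(\mathbf{f}^{(n)}(t)) < \infty$ for every continuous seminorm $p$ on $E$.

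For part (ii), I would first verify that $\Phi$ is well-defined: given $T \in L(\S'(\alpha,\beta), E)$, the reflexivity of the Fréchet–Schwartz space $\S(\alpha,\beta)$ (Proposition \ref{c:S(a,b)Nuclear}) identifies the transpose as a map $T^t: E' \to \S(\alpha,\beta)$, and the computation $\langle e', T(\delta_x)\rangle = (T^t e')(x)$ shows $\langle e', \Phi(T)\rangle = T^t(e') \in \S(\alpha,\beta)$, so part (i) applies. Injectivity follows from the density of the linear span of $\{\delta_x : x>0\}$ in $\S'(\alpha,\beta)$: a $\varphi \in \S(\alpha,\beta)$ annihilating every $\delta_x$ vanishes identically, and density then follows by Hahn–Banach together with reflexivity. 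Surjectivity is the main obstacle. Given $\mathbf{f} \in \S(\alpha,\beta;E)$, I would define $T(\mu) \in E^{\prime\ast}$ by $\langle e', T(\mu)\rangle = \mu(\langle e', \mathbf{f}\rangle)$, which is well-defined by part (i). The technical core is to show $T(\mu)$ actually lies in $E$ (not merely in the algebraic dual of $E'$) and that $T$ is continuous on $\S'(\alpha,\beta)$; for this I would exploit the $(DFS)$ representation $\S'(\alpha,\beta) = \varinjlim B_k$, restrict $T$ to each Banach step $B_k$, and use the bounded set $\{t^{\gamma+n+1}\mathbf{f}^{(n)}(t) : t > 0\}$ together with the local completeness of $E$ to realize the image inside a Banach disk of $E$ (so that the bipolar is attained in $E$). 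Then $\Phi(T) = \mathbf{f}$ follows from $T(\delta_x) = \mathbf{f}(x)$ by construction.

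For part (iii), the existence of $\mathcal{M}_{E,z}(\mathbf{f})$ as a Pettis integral is a direct application of Proposition \ref{p:weak-C0}. By part (i), $\mathbf{f}$ is scalarly $C^1$, so Lemma \ref{c:WeakC1BornCont} gives local Pettis integrability. Choosing $\delta \in (0, \min(\Re e\, z - \beta, \alpha - \Re e\, z))$ and setting $v(t) = t^{\Re e\, z - \delta}$ for $t \in (0,1]$ and $v(t) = t^{\Re e\, z + \delta}$ for $t \in [1,\infty)$ (extended continuously), one checks directly that $t^z \in C_{0,v}(\R_+)$, while $v\mathbf{f}$ is scalarly integrable because $\langle e', \mathbf{f}\rangle \in \S(\alpha,\beta)$ supplies the decay estimates at both endpoints. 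Proposition \ref{p:weak-C0} then provides $\int_0^\infty t^z \mathbf{f}(t)\,dt \in E$. Finally, for the identity $\mathcal{M}_{E,z} = \mathcal{M}_z \varepsilon \id_E$, the paper's definition of the $\varepsilon$-tensor of maps gives $(\mathcal{M}_z \varepsilon \id_E)(T) = T(\mathcal{M}_z)$ in $E$, and scalarly one has $\langle e', T(\mathcal{M}_z)\rangle = \mathcal{M}_z(T^t e') = \int_0^\infty t^z (T^t e')(t)\,dt = \int_0^\infty t^z \langle e', T(\delta_t)\rangle\,dt = \langle e', \mathcal{M}_{E,z}(\Phi(T))\rangle$, establishing the claimed equality under the isomorphism from (ii).
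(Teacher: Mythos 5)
Parts $(i)$ and $(iii)$ of your proposal are correct and follow essentially the same route as the paper: $(i)$ via Lemma \ref{l:WeakCm=>VVCm-1} and Mackey's theorem, and $(iii)$ via Lemma \ref{c:WeakC1BornCont} and Proposition \ref{p:weak-C0} (your explicit weight $v$ and the verification that $t^{z}\in C_{0,v}(\R_{+})$ while $v\mathbf{f}$ is scalarly integrable are correct, and in fact more detailed than the paper, which leaves this to the reader). The well-definedness and injectivity of $\Phi$ in $(ii)$ also match the paper's argument.

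The gap is in the surjectivity of $\Phi$, which is the real content of $(ii)$, and it lies precisely at the step you identify as the technical core. Defining $T(\mu)$ in the algebraic dual of $E'$ by $\langle e', T(\mu)\rangle = \mu(\langle e',\mathbf{f}\rangle)$ is fine, and the estimate against the bounded set $\{t^{\gamma+m+1}\mathbf{f}^{(m)}(t)\}$ shows that $T$ maps each bounded step $B_k$ of $\S'(\alpha,\beta)$ into the bipolar $D^{\circ\circ}$ (computed in the algebraic dual of $E'$) of a bounded, hence Banach, disk $D\subseteq E$. But your stated mechanism --- that this bipolar ``is attained in $E$'' --- is false in general: for $E=c_{0}$ and $D$ its unit ball, $D^{\circ\circ}$ is the unit ball of $\ell^{\infty}$, which is not contained in $c_{0}$, even though $c_{0}$ is a Banach space and a fortiori locally complete. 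Bipolar membership alone therefore cannot place $T(\mu)$ in $E$. What is actually needed is a quantitative approximation: every $\mu\in\S'(\alpha,\beta)$ must be the limit of a sequence from the span $S$ of the Dirac deltas that converges \emph{inside a fixed Banach step} of the $(DFS)$-space $\S'(\alpha,\beta)$; then $T(\mu_{n})$ is Cauchy in $E_{B}$ for a suitable Banach disk $B$, converges there by completeness of $E_{B}$, and the limit equals $T(\mu)$ by weak continuity. This local density of $S$ is not a formal consequence of topological density and is exactly what the paper imports from \cite[Lemma 6(b)]{BFJ}, after which the extension to the local completion is supplied by \cite[Proposition 5.1.25]{BPC}. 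Your outline points at the right ingredients ($(DFS)$ steps, bounded images, local completeness), but without the local density of $S$ the argument as written does not close.
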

	\begin{proof}
\noindent $(i)$		Let $\mathbf{f} \in \S(\alpha, \beta; E)$. It is clear that $\ev{e'}{\mathbf{f}} \in \S(\alpha, \beta)$ for all $e' \in E^{\prime}$. Conversely, let $\mathbf{f} : \R_{+} \rightarrow E$ be such that $\ev{e'}{\mathbf{f}} \in \S(\alpha, \beta)$ for all $e' \in E^{\prime}$. Lemma \ref{l:WeakCm=>VVCm-1} implies that $\mathbf{f} \in C^{\infty}(\R_{+}; E)$ and $\ev{e'}{\mathbf{f}}^{(m)} = \ev{e'}{\mathbf{f}^{(m)}}$ for all $e' \in E^{\prime}$ and $m \in \N$. Hence, for all $\gamma \in (\beta, \alpha)$ and $n \in \N$  the set
			\[ \{ t^{\gamma + m + 1} \mathbf{f}^{(m)}(t) \mid 0 \leq m \leq n, t >0  \} \]
		is weakly bounded and thus bounded in $E$. This means that $\mathbf{f} \in \S(\alpha, \beta; E)$. \\
\noindent $(ii)$ Note that  $(\S'(\alpha,\beta))' = \S(\alpha,\beta)$. Part $(i)$ implies that $\Phi$ is well-defined. Let $S$ be the linear span of $\{ \delta_x  \mid  x \in \R_+\}$ in $\S'(\alpha,\beta)$. By the Hahn-Banach theorem, $S$ is dense in  $\S'(\alpha,\beta)$. Consequently, $\Phi$ is injective. We now show that it is surjective. Let $\mathbf{f} \in S(\alpha, \beta; E)$ be arbitrary.  We define $T: S \to E$ as the unique linear mapping such that $T(\delta_x) = \mathbf{f}(x)$ for all $x \in \R_+$. Endow $S$ with the subspace topology of $\S'(\alpha,\beta)$.  For all equicontinuous subsets $A \subseteq E'$, it holds that
$$
\sup_{e' \in A} | \langle e', T(g) \rangle | = \sup_{e' \in A} | \langle g, \langle e', \mathbf{f} \rangle \rangle |, \qquad g \in S.
$$
Note that $\{  \langle e', \mathbf{f} \rangle \mid e' \in A \}$ is bounded in $\S(\alpha,\beta)$. Hence, $T: S \to E$ is continuous. Since $\overline{S} = \S'(\alpha,\beta)$ and $\S'(\alpha,\beta)$ is a $(DFS)$-space, \cite[Lemma 6(b)]{BFJ} implies that the local completion \cite[Definition 5.1.21]{BPC} of $S$ is equal to $\S'(\alpha,\beta)$. As $E$ is locally complete, there is  $\widetilde{T} \in L(\S'(\alpha,\beta), E)$ such that $\widetilde{T}_{\mid S} = T$ \cite[Proposition 5.1.25]{BPC}. Clearly,  $\Phi(\widetilde{T}) = \mathbf{f}$.
 \\
\noindent $(iii)$ Let $\mathbf{f} \in \S(\alpha,\beta;E)$. By Lemma \ref{c:WeakC1BornCont}, $\mathbf{f}$  is locally Pettis integrable. Hence, Proposition \ref{p:weak-C0} implies that the Pettis integral  $\M_{E, z}(\mathbf{f})$ exists for each $z \in (\beta, \alpha)$.  The second statement is clear.	
	\end{proof}
	
Finally, we need the following regularization result.
	
	\begin{lem}
		\label{l:VVMellinConvolution}
		Let $E$ be a locally complete lcHs. Let $\mathbf{f} : \R_{+} \rightarrow E$ be a locally Pettis integrable function such that $t^{\gamma} \mathbf{f}(t): \R_+ \to E$ is scalarly integrable for all $\gamma \in (\beta, \alpha)$ and let $\psi \in \S(\alpha, \beta)$. 
		Then, for each $t >0$ the Pettis integral
			\[\mathbf{f} *_{M} \psi(t) = \int_{0}^{\infty} \mathbf{f}(x) \psi\left(\frac{t}{x}\right) \frac{dx}{x} \]
			exists and $\mathbf{f} *_{M} \psi \in \S(\alpha, \beta; E)$. 
	\end{lem}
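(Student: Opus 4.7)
The plan is to prove the two assertions in turn, reducing each to a scalar fact provided by the preceding sections.

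\textbf{Existence of the Pettis integral at each $t > 0$.} Fix $t > 0$ and choose $\beta < \gamma_1 < \gamma_2 < \alpha$. I would apply Proposition \ref{p:weak-C0} with the continuous positive weight $v(x) := x^{\gamma_1} + x^{\gamma_2}$. Scalar integrability of $x^{\gamma_1}\mathbf{f}$ and $x^{\gamma_2}\mathbf{f}$ immediately gives that $v\mathbf{f}$ is scalarly integrable. It remains to verify that $\varphi_t(x) := \psi(t/x)/x$ belongs to $C_{0,v}(\R_+)$: from $|\psi(y)| \le \|\psi\|_{\gamma,0}\, y^{-\gamma-1}$ one obtains $|\varphi_t(x)| \le \|\psi\|_{\gamma,0}\, t^{-\gamma-1} x^{\gamma}$, and picking any $\gamma \in (\gamma_1, \gamma_2)$ makes $|\varphi_t(x)|/v(x)$ comparable to $x^{\gamma - \gamma_1}$ near $0$ and to $x^{\gamma - \gamma_2}$ near $\infty$, both of which tend to $0$. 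Proposition \ref{p:weak-C0} then produces the Pettis integral $\mathbf{f}*_M\psi(t)$.

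\textbf{Regularity.} By Lemma \ref{p:VVS(alpha,beta)IfEvalInS(alpha,beta)}$(i)$ it suffices to check that $\langle e', \mathbf{f}*_M\psi\rangle \in \S(\alpha, \beta)$ for every $e' \in E'$. Set $g := \langle e', \mathbf{f}\rangle$; by hypothesis $g$ is measurable and $x^\gamma g \in L^1(\R_+)$ for each $\gamma \in (\beta, \alpha)$. The defining property of the Pettis integral gives
$$ \langle e', (\mathbf{f}*_M\psi)(t)\rangle = \int_0^\infty g(x)\,\psi(t/x)\,\frac{dx}{x} . $$
Differentiating under the integral (justified below) yields
$$ \frac{d^n}{dt^n}\langle e', (\mathbf{f}*_M\psi)(t)\rangle = \int_0^\infty g(x)\,\psi^{(n)}(t/x)\,\frac{dx}{x^{n+1}} , $$
and the bound $|\psi^{(n)}(t/x)| \le \|\psi\|_{\gamma, n}\,(t/x)^{-\gamma-n-1}$ immediately delivers the key estimate
$$ t^{\gamma+n+1}\,\Bigl|\frac{d^n}{dt^n}\langle e', (\mathbf{f}*_M\psi)(t)\rangle\Bigr| \le \|\psi\|_{\gamma, n}\, \int_0^\infty x^\gamma |g(x)|\,dx $$
for every $\gamma \in (\beta, \alpha)$ and $n \in \N$. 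Hence $\langle e', \mathbf{f}*_M\psi\rangle \in \S(\alpha, \beta)$, as required.

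\textbf{Main obstacle.} The real technical point is justifying the differentiation under the integral sign, which I would carry out inductively on $n$. For $t$ ranging over any compact interval $[a,b] \subset \R_+$, the bound $|\psi^{(n+1)}(t/x)|/x^{n+2} \le \|\psi\|_{\gamma, n+1}\, a^{-\gamma-n-2} x^{\gamma}$ furnishes an $x \mapsto C\,x^\gamma |g(x)|$ majorant that is independent of $t \in [a,b]$ and lies in $L^1(\R_+)$, so the difference quotients pass to the limit by dominated convergence. The same majorant also produces continuity of each derivative in $t$, completing the verification that $\langle e', \mathbf{f}*_M\psi\rangle \in C^\infty(\R_+)$.
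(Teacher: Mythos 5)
Your proof is correct and follows essentially the same route as the paper: existence via Proposition \ref{p:weak-C0}, and membership in $\S(\alpha,\beta;E)$ by reducing to the scalar estimate through Lemma \ref{p:VVS(alpha,beta)IfEvalInS(alpha,beta)}$(i)$, with the same bound $t^{\gamma+n+1}|\langle e',\mathbf{f}*_M\psi\rangle^{(n)}(t)|\le \|\langle e',x^\gamma\mathbf{f}\rangle\|_{L^1}\|\psi\|_{\gamma,n}$. You merely supply details the paper leaves implicit (the choice of weight $v$ and the dominated-convergence justification for differentiating under the integral), and these are carried out correctly.
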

	
	\begin{proof}
	Proposition \ref{p:weak-C0}  implies that the Pettis integral  $\mathbf{f} *_{M} \psi(t)$ exists for all $t >0$. Let $e' \in E^{\prime}$ be arbitrary. Then, 
	$\ev{e'}{\mathbf{f} *_{M} \psi} \in C^\infty(\R_+)$ and for all  $\gamma \in (\beta, \alpha)$ and $m \in \N$ it holds that
			\begin{align*} 
				&\sup_{t >0}t^{\gamma + m + 1} |\ev{e'}{\mathbf{f} *_{M} \psi(t)}^{(m)}| \\
				&\hspace{2cm} \leq \int_{0}^{\infty} |\ev{e'}{x^{\gamma} \mathbf{f}(x)}| \left(\frac{t}{x}\right)^{\gamma + m + 1} \left|\psi^{(m)}\left(\frac{t}{x}\right)\right| dx \\
				&\hspace{2cm} \leq \|\ev{e'}{x^{\gamma} \mathbf{f}(x)}\|_{L^{1}} \|\psi\|_{\gamma, m}. 
			\end{align*}
		Thus $\ev{e'}{\mathbf{f} *_{M} \psi} \in \S(\alpha, \beta)$. The result now follows from Lemma \ref{p:VVS(alpha,beta)IfEvalInS(alpha,beta)}$(i)$.
	\end{proof}	
	
We are ready to show Theorems \ref{t:main} and \ref{t:MainResultSeqSp}.

	\begin{proof}[Proof of Theorem \ref{t:main}] 
		$(i) \Leftrightarrow (iii)$ Since $E$ is non-empty,  $(iii)$ implies that  $(\M_{z_{n}})_{n \in \N} \subseteq \S^{\prime}(\alpha, \beta)$ is Eidelheit. By Remark \ref{t:ClassicStieltjesMomentProblemEidelheit} the latter is equivalent to the fact that $(z_n)_{n \in \N}$ satisfies $(\condS)$.  Hence, it suffices to show that, under the assumption that  $(\M_{z_{n}})_{n \in \N} \subseteq \S^{\prime}(\alpha, \beta)$ is Eidelheit, $E$ satisfies $(\condA)$ if and only if  $(iii)$ holds.
		 By $(ii)$ and $(iii)$ of Lemma \ref{p:VVS(alpha,beta)IfEvalInS(alpha,beta)},  $(iii)$ is equivalent to the fact that the associated $E$-valued sequence of $(\M_{z_{n}})_{n \in \N}$ is Eidelheit. $\S(\alpha, \beta)$ is a nuclear Fr\'{e}chet space satisfying $(\Omega)$ and $(\uDN)$ (Proposition \ref{c:S(a,b)Nuclear}).	From Theorem \ref{l:ClosedIdealHasOmega} and Lemma \ref{t:S(a,b)MConvexAlgebra} we obtain that
					\[ \{ \varphi \in \S(\alpha, \beta) \mid \M_{z_{n}}(\varphi) = 0,  \forall n \in \N \} \]
		satisfies $(\Omega)$. Hence, the result follows from Theorem \ref{t:Eid}. 
		
	\noindent	$(ii) \Rightarrow (iii)$ Since $E$ is non-empty,  $(ii)$ implies that condition $(ii)$ of Theorem \ref{t:ClassicStieltjesMomentProblem} holds. Hence, this result yields that there is $\psi \in \S(\alpha, \beta)$ such that $\M_{z_{n}}(\psi) = 1$ for all $n \in \N$. Let $(e_{n})_{n \in \N} \in E^{\N}$ be arbitrary. Choose $\mathbf{f} : \R_{+} \rightarrow E$ as in $(ii)$.  Note that $t^{\gamma} \mathbf{f}(t): \R_+ \to E$ is scalarly integrable for all $\gamma \in (\beta, \alpha)$. 
	By Lemma \ref{l:VVMellinConvolution} we have that $\mathbf{f} *_{M}  \psi \in \S(\alpha, \beta; E)$. For all $n \in \N$ and $e' \in E^{\prime}$ it holds that
			\[\langle e', \M_{E, z_{n}}(\mathbf{f} *_{M} \psi) \rangle = \M_{z_{n}}(  \langle e',\mathbf{f} \rangle *_{M} \psi ) =  \M_{z_{n}}(  \langle e',\mathbf{f} \rangle)  \M_{z_{n}}(\psi)= \langle e', e_n \rangle,  \]
		whence $\M_{E, z_{n}}(\mathbf{f} *_{M} \psi ) = e_n$.
		
	\noindent	$(iii) \Rightarrow (ii)$ Obvious as every $\mathbf{f} \in \S(\alpha, \beta;E)$ is locally Pettis integrable (Lemma \ref{c:WeakC1BornCont}).
	\end{proof}

	\begin{proof}[Proof of Theorem \ref{t:MainResultSeqSp}]
	 	We define $E$ as the space consisting of all $(c_{\lambda})_{\lambda \in \Lambda} \in \C^{\Lambda}$ such that  $\sup_{\lambda \in \Lambda}  |c_{\lambda}|  \omega_j(\lambda)< \infty$ for some $j \in \N$. We endow 
		 endow $E$ with its natural  $(LB)$-space topology. Then, $E$ is regular  \cite[Proposition 10]{B-IntroLCIndLim} and thus locally complete. Moreover, $E$ satisfies $(\condA)$ if and only if the condition \eqref{eq:AParamDep} holds. 
		 Since $E$ is regular, a function $\mathbf{f} = (f_\lambda)_{\lambda} :\R_+ \to E$ belongs to $\S(\alpha, \beta;E)$ if and only if $f_\lambda \in \S(\alpha, \beta)$ for all $\lambda \in \Lambda$ and \eqref{charvv} holds. Hence, the equivalence $(i) \Leftrightarrow (iii)$ follows from Theorem \ref{t:main}. The implication $(iii) \Rightarrow (ii)$ is trivial. The fact that  $(ii)$ implies $(iii)$ can be shown in the same way as the implication  $(ii) \Rightarrow (iii)$ in Theorem \ref{t:main}.
	\end{proof}


\end{document}